\documentclass{article}

\usepackage[overload]{abraces}
\usepackage{amsmath}
\usepackage{amssymb}
\usepackage{amsthm}
\usepackage{array,multirow}
\usepackage{enumerate}
\usepackage{tikz}
\usepackage{type1cm}
\usepackage{url}

\usepackage{pgfplots}
\usepgfplotslibrary{dateplot}
\pgfplotsset{compat=newest}
\usepgfplotslibrary{fillbetween}

\setlength{\parindent}{0pt}

\newtheorem{theorem}{Theorem}[section]
\newtheorem{corollary}[theorem]{Corollary}
\newtheorem{definition}[theorem]{Definition}
\newtheorem{example}[theorem]{Example}
\newtheorem{lemma}[theorem]{Lemma}
\newtheorem{proposition}[theorem]{Proposition}
\newtheorem{remark}[theorem]{Remark}

\makeatletter
\newcommand{\leqnomode}{\tagsleft@true\let\veqno\@@leqno}
\newcommand{\reqnomode}{\tagsleft@false\let\veqno\@@eqno}
\makeatother

\title{Bilevel Optimization under Uncertainty}

\author{J. Burtscheidt \footnotemark[1]\ \and M. Claus \footnotemark[1]}

\pagestyle{myheadings}
\thispagestyle{plain}
\markboth{J. Burtscheidt, M. Claus and S. Dempe}{Bilevel Optimization under Uncertainty}

\begin{document}

\maketitle

\renewcommand{\thefootnote}{\fnsymbol{footnote}}
\footnotetext[1]{Faculty of Mathematics, University of Duisburg-Essen, Campus Essen, Thea-Leymann-Stra\ss e 9,
D-45127 Essen, Germany, \url{[johanna.burtscheidt][matthias.claus]@uni-due.de}}
\renewcommand{\thefootnote}{\arabic{footnote}}

\begin{abstract}
    We consider bilevel linear problems, where the right-hand side of the lower level problems is stochastic. The leader has to decide in a here-and-now fashion, while the follower has complete information. In this setting, the leader's outcome can be modeled by a random variable, which gives rise to a broad spectrum of models involving coherent or convex risk measures and stochastic dominance constraints. We outline Lipschitzian properties, conditions for existence and optimality, as well as stability results. Moreover, for finite discrete distributions, we discuss the special structure of equivalent deterministic bilevel programs and its potential use to mitigate the curse of dimensionality.
\end{abstract}

\section{Introduction} In this chapter we consider bilevel optimization models with uncertain parameters. Such models can be classified based on the chronology of decision and observation as well as the nature of the uncertainty involved. A \textbf{bilevel stochastic program} arises, if the uncertain parameter is realization of some random vector with known distribution, that can only be observed once the leader has submitted their decision. In contrast, the follower decides under complete information.

\medskip

If upper and lower level objectives coincide, the bilevel stochastic program collapses to a classical \textbf{stochastic optimization problem with recourse} (cf. \cite[Chap. 2]{ShapiroDentchevaRuszczynski2014}). Relations to other mathematical programming problems are explored in the seminal work \cite{PatrikssonWynter1999} that also established the existence of solutions, Lipschitzian properties and directional differentiability of a risk-neutral formulation of a bilevel stochastic nonlinear model. Moreover, gradient descent and penalization methods were investigated to tackle discretely distributed stochastic mathematical programs with equilibrium constraints (SMPECs).

\medskip

Reference \cite{ChristiansenPatrikssonWynter2001} studies an application to topology optimization problems in structural mechanics. Many other applications are motivated by network related problems that inherit a natural order of successive decision making under uncertainty. Notable examples arise in telecommunications (cf. \cite{Werner2005}), grid-based (energy) markets (cf. \cite{CarrionArroyoConejo2009}, \cite{DempeKalashnikovPerezValdesKalashnykova2011}, \cite{KosuchLeBodicLeungLisser2012}, \cite{KovacevicPflug2013}) or transportation science (cf. \cite{ChenKimZhouChootinan2007}, \cite{Patriksson2008}). An extensive survey on bilevel stochastic programming literature is provided in \cite[Chap. 1.4]{Henkel2014}. 

\medskip

In \textbf{two-stage stochastic bilevel programming} leader and follower take two decisions: The decision on the respective first-stage variables is made in a \textbf{here-and-now} fashion, i.e. without knowledge of the realization of the random parameter. In contrast, the respective second-stage decisions are made in a \textbf{wait-and-see} manner, i.e. after observing the parameter (cf. \cite{AlizadehMarcotteSavard2013}).

\medskip

This chapter is organized as follows: In Sects. 2.1 to 2.5, we outline structural properties, existence and optimality conditions as well as stability results for bilevel stochastic linear problems while paying special attention to the modelling of risk-aversion via coherent/convex risk measures or stochastic dominance constraints. Sections 2.6 and 2.7 are devoted to the algorithmic treatment of bilevel stochastic linear problems, where the underlying distribution is finite discrete. An application of two-stage stochastic bilevel programming in the context of network pricing is discussed in Sect. 3. The chapter concludes with an overview of potential challenges for future research.

\section{Bilevel Stochastic Linear Optimization} \label{SecBilevelStochasticLinearOptimization}

While the analysis in this section is confined to the bilevel stochastic linear problems with random right-hand side, the concepts and underlying principles and can be easily transferred to stochastic extensions of more complex bilevel programming models. 

\subsection{Preliminaries} \label{SubSecPreliminaries}

We consider the optimistic formulation of a parametric bilevel linear program
\begin{equation}
\label{ParametricBLP}
\tag*{P($z$)}
\min_x \Big\{c^\top x + \min_y \{q^\top y \; | \; y \in \Psi(x,z)\} \; | \; x \in X \Big\},
\end{equation}
where $X \subseteq \mathbb{R}^n$ is a nonempty polyhedron, $c \in \mathbb{R}^n$ and $q \in \mathbb{R}^m$ are vectors, $z \in \mathbb{R}^s$ is a parameter, and the lower level optimal solution set mapping $\Psi: \mathbb{R}^n \times \mathbb{R}^s \rightrightarrows \mathbb{R}^m$ is given by
$$
\Psi(x,z) := \underset{y}{\mathrm{Argmin}} \; \{d^\top y \; | \; Ay \leq Tx + z\}
$$
with matrices $A \in \mathbb{R}^{s \times m}$, $T \in \mathbb{R}^{s \times n}$ and a vector $d \in \mathbb{R}^m$. 
Let $f: \mathbb{R}^n \times \mathbb{R}^s \to \mathbb{R} \cup \lbrace \pm \infty \rbrace$ denote the mapping
$$
f(x,z) := c^\top x + \min_y \{q^\top y \; | \; y \in \Psi(x,z)\}.
$$

\begin{lemma} \label{Lemmaf}
Assume $\mathrm{dom} \; f \neq \emptyset$, then $f$ is real-valued and Lipschitz continuous on the polyhedron $P = \{(x,z) \in \mathbb{R}^n \times \mathbb{R}^s \; | \; \exists y \in \mathbb{R}^m: Ay \leq Tx + z\}$.
\end{lemma}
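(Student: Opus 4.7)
My plan is to reduce Lipschitz continuity to the classical stability theory of parametric linear programs by using LP duality to unfold the bilevel structure of the lower level.

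First I would verify that $f$ is real-valued on $P$. The lower-level LP $\min_y\{d^\top y \mid Ay \leq Tx+z\}$ has primal-feasible set exactly $P$, while its dual polyhedron $\Lambda := \{\lambda \geq 0 \mid A^\top \lambda = -d\}$ depends only on the data. Since $\mathrm{dom}\, f \neq \emptyset$ forces this LP to be solvable at some $(x_0,z_0)$, LP duality yields $\Lambda \neq \emptyset$, whence the lower LP is solvable and $\Psi(x,z) \neq \emptyset$ throughout $P$. The recession cone of $\Psi(x,z)$ is the data-only cone $\{y \mid Ay \leq 0,\, d^\top y \leq 0\}$; boundedness of $q^\top y$ on $\Psi(x_0,z_0)$ (guaranteed by $f(x_0,z_0) \in \mathbb{R}$) therefore propagates to all of $P$, so $f$ is real-valued there.

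For Lipschitz continuity I would exploit the finitely many vertices $\lambda^1, \ldots, \lambda^K$ of $\Lambda$. By LP duality,
\[
\phi(x,z) := \min_y\{d^\top y \mid Ay \leq Tx+z\} \;=\; \max_{k=1,\ldots,K} \bigl(-(Tx+z)^\top \lambda^k\bigr) \quad \text{for } (x,z) \in P,
\]
so $P$ decomposes into finitely many polyhedra $P_k := \{(x,z) \in P \mid -(Tx+z)^\top \lambda^k = \phi(x,z)\}$. On $P_k$ the optimality characterisation $\Psi(x,z) = \{y \mid Ay \leq Tx+z,\, d^\top y \leq -(Tx+z)^\top \lambda^k\}$ turns $f$ into
\[
f(x,z) = c^\top x + \min_y\bigl\{q^\top y \mid Ay \leq Tx+z,\; d^\top y \leq -(Tx+z)^\top \lambda^k\bigr\},
\]
the optimal value of a parametric LP with right-hand side depending linearly on $(x,z)$. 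The classical Walkup--Wets stability theorem for LP value functions then shows that $f|_{P_k}$ is convex polyhedral, and in particular Lipschitz, on $P_k$.

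It remains to glue the pieces. Since $f(x,z)$ is unambiguous, the polyhedral selections on $P_k$ and $P_{k'}$ agree on $P_k \cap P_{k'}$, so $f$ is continuous across interfaces; refining the partition by the piecewise-affine structure of each $f|_{P_k}$ expresses $f$ as a continuous piecewise affine function on $P$ defined by finitely many linear selections. A uniform Lipschitz constant is obtained by taking the maximum of the slope norms of these selections, and a Lipschitz estimate on all of $P$ follows by joining $(x,z), (x',z') \in P$ by their line segment, subdividing it via its finitely many intersections with the partition, and summing the per-piece estimates. I expect the gluing---in particular, extracting a uniform Lipschitz constant across interfaces on the possibly unbounded polyhedron $P$---to be the main technical point.
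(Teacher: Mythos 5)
Your argument is correct, but it takes a genuinely different route from the paper's. The paper handles real-valuedness by citing \cite{Eaves1971} (to get $\mathrm{dom}\,\Psi = P$) and parametric LP theory for solvability of the inner minimization, and then obtains the Lipschitz estimate in one stroke: following \cite{KlatteKummer1984} it bounds $|f(x,z)-f(x',z')|$ by $\|c\|\|x-x'\|+\|q\|\|y-y'\|$ for a suitable pair of lower-level optimal solutions, and then invokes the quantitative stability result of \cite[Theorem 4.2]{KlatteThiere1995}, i.e.\ Hausdorff--Lipschitz continuity of the polyhedral multifunction $\Psi$ in $(x,z)$, to control $\|y-y'\|$. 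You instead avoid any stability theory for the \emph{solution-set} mapping: you use the dual vertices $\lambda^1,\dots,\lambda^K$ of the lower level to decompose $P$ into polyhedral cells $P_k$ on which $\Psi(x,z)$ is itself a polyhedron with right-hand side affine in $(x,z)$, reduce to Lipschitz continuity of LP \emph{value} functions on each cell, and glue along segments (your recession-cone argument for real-valuedness and the segment-subdivision argument are both sound, since $P$ is convex and the cells are finitely many closed polyhedra covering it). What the paper's approach buys is brevity and robustness: the Klatte--Kummer/Klatte--Thiere argument does not use the linearity of the lower-level objective in any essential way, which is exactly why the authors remark that it extends to convex quadratic lower levels; your dual-vertex decomposition is tied to the polyhedral/linear structure and would not transfer as directly. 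What your approach buys is self-containedness -- only LP duality and elementary polyhedral geometry -- plus an explicit Lipschitz constant (the maximum dual-vertex norm over the cells) and, as a by-product, the piecewise-affine structure of $f$ on $P$, which the paper's proof does not exhibit.
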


\begin{proof}
By \cite{Eaves1971}, $\emptyset \neq \mathrm{dom} \; f \subseteq \mathrm{dom} \; \Psi$ implies $\mathrm{dom} \; \Psi = P$. Consequently, the linear program in the definition of $f(x,z)$ is solvable for any $(x,z) \in P$ by parametric linear programming theory (see \cite{Beer1977}). Consider any $(x,z), (x',z') \in P$. Without loss of generality, assume that $f(x,z) \geq f(x',z')$ and let $y' \in \Psi(x',z')$ be such that $f(x',z') = c^\top x' + q^\top y'$. Following \cite{KlatteKummer1984} we obtain
\begin{align*}
|f(x,z)-f(x',z')| \; &= \; f(x,z)- c^\top x' - q^\top y' \; \leq \; c^\top x + q^\top y - c^\top x' - q^\top y' \\
&\leq \; \|c\| \|x-x'\| + \|q\| \|y-y'\|
\end{align*}
for any $y \in \Psi(x,z)$. Let $\mathbb{B}$ denote the Euclidean unit ball, then \cite[Theorem 4.2]{KlatteThiere1995} yields
$$
\Psi(x',z')\subseteq \Psi(x,z)+ \Lambda \|(x,z)-(x',z')\|\mathbb{B}
$$
and hence $|f(x,z)-f(x',z')| \; \leq \; (\|c\| + \Lambda \|q\|) \|(x,z)-(x',z')\|$.
\end{proof}

\begin{remark}
An alternate proof for Lemma~\ref{Lemmaf} is given in \cite[Theorem 1]{Ivanov2014}. However, the arguments above can be easily extended to lower level problems with convex quadratic objective function and linear constraints.
\end{remark}

Linear programming theory provides verifiable necessary and sufficient condition for $\mathrm{dom} \; f \neq \emptyset$:

\begin{lemma}
$\mathrm{dom} \; f \neq \emptyset$ holds if and only if there exists $(x,z) \in \mathbb{R}^n \times \mathbb{R}^s$ such that
\begin{enumerate}[a.]
\item $\{y \; | \;  Ay \leq Tx + z\}$ is nonempty,
\item there is some $u \in \mathbb{R}^s$ satisfying $A^\top u = d$ and $u \leq 0$, and
\item the function $y \mapsto q^\top y$ is bounded from below on $\Psi(x,z)$.
\end{enumerate}
Under these conditions,
$$
\min_{y} \lbrace q^\top y \; | \; y \in \Psi(x',z') \rbrace
$$
is attained for any $(x',z') \in P$.
\end{lemma}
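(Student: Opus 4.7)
The plan is to reduce the statement to the solvability of two nested linear programs: the lower level LP $\min_y\{d^\top y \mid Ay \leq Tx+z\}$, whose optimal set is $\Psi(x,z)$, and the ``upper'' LP $\min_y\{q^\top y \mid y \in \Psi(x,z)\}$ that defines $f(x,z)$. A point $(x,z)$ lies in $\mathrm{dom}\,f$ precisely when both LPs have a finite optimal value. For the lower level I would apply LP duality directly: primal feasibility is exactly condition a; the Lagrangian dual, after a sign flip to accommodate the ``$\leq$'' constraints, reads $\max (Tx+z)^\top u$ s.t.\ $A^\top u = d$, $u \leq 0$, so its feasibility is condition b. Crucially, b depends only on $A$ and $d$, not on $(x,z)$. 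By LP duality the lower LP is solvable iff a and b both hold, which also recovers the Eaves-type observation that, under b, $\mathrm{dom}\,\Psi = P$.

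Next I would analyse the upper LP. The feasible set $\Psi(x,z)$ is the optimal face of a solvable LP, so whenever $\Psi(x,z)\neq\emptyset$ its recession cone equals $\{r : Ar \leq 0,\ d^\top r = 0\}$, which is \emph{independent} of $(x,z)$. Consequently $q^\top y$ is bounded from below on $\Psi(x,z)$ iff $q^\top r \geq 0$ for every $r$ in this common recession cone -- a condition that transfers verbatim from one base point in $P$ to every other $(x',z') \in P$. Once boundedness from below on the nonempty polyhedron $\Psi(x',z')$ is in hand, the Frank--Wolfe theorem delivers attainment of the infimum, which is exactly the ``furthermore'' clause.

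Combining the two: $(\Rightarrow)$ if $(x,z)\in\mathrm{dom}\,f$ then both nested LPs are solvable at $(x,z)$, yielding a, b, c immediately; $(\Leftarrow)$ if a, b, c hold at some $(x,z)$, then a and b make the lower LP solvable at every point of $P$, and c combined with the $(x,z)$-independence of the recession cone of $\Psi$ makes the upper LP bounded below and hence attained on each $\Psi(x',z')$, so $P \subseteq \mathrm{dom}\,f$. The main obstacle is essentially bookkeeping: keeping the dual sign convention straight in b (the lower level uses ``$\leq$'' rather than ``$\geq$'') and justifying that the recession cone of the optimal face of a parametric LP does not move with $(x,z)$. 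Both are routine for anyone comfortable with parametric linear programming, but both must be recorded carefully for the equivalence and the ``furthermore'' assertion to fall out cleanly.
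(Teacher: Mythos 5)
Your argument is correct: LP duality for the lower level (conditions a and b give solvability of the lower-level LP at every point of $P$, since b does not depend on $(x,z)$), the $(x,z)$-independence of the recession cone of the optimal face $\Psi(x,z)$, and Frank--Wolfe-type attainment together yield both the equivalence and the ``furthermore'' clause. The paper states this lemma without proof, attributing it to linear programming theory, and your sketch is precisely the standard argument being invoked, so there is nothing to contrast.
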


\subsection{Bilevel Stochastic Linear Programming Models} \label{SubSecBilevelStochasticLinearModels}

A bilevel stochastic program arises if the parameter $z = Z(\omega)$ in \ref{ParametricBLP} is the realization of a known random vector $Z$ on some probability space $(\Omega, \mathcal{F}, \mathbb{P})$ and we assume the following chronology of decision and observation:
\begin{center}
leader decides $x$ \hspace{10pt} $\rightarrow$ \hspace{10pt} $z = Z(\omega)$ is revealed \hspace{10pt} $\rightarrow$ \hspace{10pt} follower decides $y$.
\end{center}
Throughout the analysis, we assume the stochasticity to be purely exogenous, i.e. the distribution of $Z$ to be independent of $x$. 

\medskip

Let $\mu_Z := \mathbb{P} \circ Z^{-1} \in \mathcal{P}(\mathbb{R}^s)$ denote the Borel probability measure induced by $Z$. We shall assume $\mathrm{dom} \; f \neq \emptyset$ and that the lower level problem is feasible for any leader's decision and any realization of the randomness, i.e.
$$
X \subseteq P_Z := \lbrace x \in \mathbb{R}^n \; | \; (x,z) \in P \; \forall z \in \mathrm{supp} \; \mu_Z \rbrace.
$$

In two-stage stochastic programming, a similar assumption is known as \textbf{relatively complete recourse} (cf. \cite[Sect. 2.1.3]{ShapiroDentchevaRuszczynski2014}). In this setting, each leader's decision $x \in X$ gives rise to a random variable $f(x,Z(\cdot))$. We thus may fix any mapping $\mathcal{R}: \mathcal{X} \to \mathbb{R}$, where $\mathcal{X}$ is a linear subspace of $L^0(\Omega, \mathcal{F}, \mathbb{P})$ that contains the constants and  satisfies
$$
\lbrace f(x,Z(\cdot)) \; | \; x \in X \rbrace \subseteq \mathcal{X},
$$
and consider the \textbf{bilevel stochastic program}
\begin{equation}
\label{RSBLP}
\min_x \left\{\mathcal{R}[f(x,Z(\cdot))] \; | \; x \in X \right\}.
\end{equation}
Under suitable moment or boundedness conditions on $Z$ the classical $L^p$-spaces $L^p(\Omega, \mathcal{F}, \mathbb{P})$ with $p \in [1, \infty]$ are natural choices for the domain $\mathcal{X}$ of $\mathcal{R}$. Set
$$
\mathcal{M}^p_s := \left\{ \mu \in \mathcal{P}(\mathbb{R}^s) \; | \; \int_{\mathbb{R}^s} \|z\|^p~\mu(dz) < \infty \right\}
$$
denote the set of Borel probability measures on $\mathbb{R}^s$ with finite moments of order $p \in [1,\infty)$ and set
$$
\mathcal{M}^\infty_s := \left\{ \mu \in \mathcal{P}(\mathbb{R}^s) \; | \; \mathrm{supp} \; \mu_Z \; \text{is bounded} \right\}.
$$

\begin{lemma} \label{LemmaF}
Assume $\mathrm{dom} \; f \neq \emptyset$ and $\mu_Z \in \mathcal{M}^p_s$ for some $p \in [1,\infty]$. Then the mapping $F: P_Z \to L^0(\Omega, \mathcal{F}, \mathbb{P})$ given by $F(x) := f(x,Z(\cdot))$ takes values in $L^p(\Omega, \mathcal{F}, \mathbb{P})$ and is Lipschitz continuous with respect to the $L^p$-norm.
\end{lemma}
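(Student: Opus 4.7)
The plan is to reduce both claims to the Lipschitz property of $f$ established in Lemma~\ref{Lemmaf}, exploiting that the assumption $X \subseteq P_Z$ forces $(x, Z(\omega)) \in P$ for $\mathbb{P}$-almost every $\omega$ and every $x$ in the domain of interest. More precisely, $\mu_Z(\mathrm{supp}\,\mu_Z) = 1$ implies that $Z(\omega) \in \mathrm{supp}\,\mu_Z$ almost surely, so for any $x \in P_Z$ the pair $(x, Z(\omega))$ lies in $P$ almost surely, and $f(x, Z(\omega))$ is well-defined and real-valued there; on the exceptional null set we may set it arbitrarily. Denote by $L$ the Lipschitz constant of $f$ on $P$ provided by Lemma~\ref{Lemmaf}.

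First I would address measurability: since $f$ is continuous on $P$ (Lipschitz) and $Z$ is Borel measurable, the composition $\omega \mapsto f(x, Z(\omega))$ is measurable, so $F(x) \in L^0(\Omega, \mathcal{F}, \mathbb{P})$.

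Next, for $L^p$-integrability, I would fix any reference point $(x_0, z_0) \in P$ and use the global Lipschitz bound from Lemma~\ref{Lemmaf} to write
$$
|F(x)(\omega)| \;\leq\; |f(x_0, z_0)| + L \|x - x_0\| + L \|Z(\omega) - z_0\|
$$
for $\mathbb{P}$-a.e.\ $\omega$. For $p \in [1,\infty)$, raising to the $p$-th power and invoking the inequality $(a+b)^p \leq 2^{p-1}(a^p + b^p)$, then integrating, the conclusion $F(x) \in L^p$ follows once one observes that $\mu_Z \in \mathcal{M}^p_s$ together with the triangle inequality yields $\mathbb{E}\|Z - z_0\|^p < \infty$. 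For $p = \infty$, $\mathrm{supp}\,\mu_Z$ is bounded by definition of $\mathcal{M}^\infty_s$, so the right-hand side is essentially bounded directly.

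Finally, for Lipschitz continuity of $F$, take $x, x' \in P_Z$. Holding the second argument fixed at $Z(\omega)$ and applying Lemma~\ref{Lemmaf} gives
$$
|F(x)(\omega) - F(x')(\omega)| \;=\; |f(x, Z(\omega)) - f(x', Z(\omega))| \;\leq\; L \|x - x'\|
$$
for $\mathbb{P}$-a.e.\ $\omega$. Since the right-hand side is deterministic, taking $L^p$-norms preserves the bound, yielding $\|F(x) - F(x')\|_{L^p} \leq L \|x - x'\|$ for every $p \in [1,\infty]$. No real obstacle arises; the only mild subtlety is the measure-theoretic bookkeeping that confines $(x, Z(\omega))$ to $P$ almost surely so that Lemma~\ref{Lemmaf} applies, and that the integrability step correctly matches the moment/boundedness hypothesis on $\mu_Z$ to the exponent $p$.
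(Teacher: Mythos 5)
Your proof is correct and follows essentially the same route as the paper: bound $|f(x,Z(\omega))|$ via the Lipschitz constant from Lemma~\ref{Lemmaf} relative to a fixed reference point (the paper uses $(0,0)$, which always lies in $P$), combine with the moment or support condition on $\mu_Z$ to get $F(x)\in L^p$, and then obtain the Lipschitz estimate from the pointwise bound $|f(x,Z(\omega))-f(x',Z(\omega))|\leq L\|x-x'\|$. The extra care you take with measurability and with confining $(x,Z(\omega))$ to $P$ almost surely is left implicit in the paper but is entirely consistent with it.
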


\begin{proof}
We first consider the case that $p$ is finite. By $(0,0) \in P$ and Lemma~\ref{Lemmaf}, there exist a constant $L_f$ such that
\begin{align*}
\| F(x) \|_{L^p}^p &\leq 2^p |f(0,0)|^p + 2^p \int_{\mathbb{R}^s} |f(x,z) - f(0,0)|^p~\mu_Z(dz) \\
&\leq 2^p |f(0,0)|^p + 2^p L_f^p \|x\|^p +  2^p L_f^p \int_{\mathbb{R}^s} \|z\|^p~\mu_Z(dz) < \infty
\end{align*}
holds for any $x \in P_Z$. Furthermore, for any $x, x' \in P_Z$ we have
$$
\| F(x) - F(x') \|_{L^p} = \left( \int_{\mathbb{R}^s} |f(x,z) - f(x',z)|^p~\mu_Z(dz) \right)^{1/p} \leq L_f \|x - x' \|.
$$

For $p = \infty$, Lemma~\ref{Lemmaf} implies that for any fixed $x \in P_Z$, the mapping $f(x,\cdot)$ is continuous on $\mathrm{supp} \; \mu_Z$. Thus, $\mu_Z \in \mathcal{M}^\infty_s$ yields
$$
\|F(x)\|_{L^\infty} \leq \sup_{z \in \mathrm{supp} \; \mu_Z} |f(x,z)| < \infty.
$$
Moreover, for any $x, x' \in P_Z$ we have
$$
\| F(x) - F(x') \|_{L^\infty} \leq \sup_{z \in \mathrm{supp} \; \mu_Z} |f(x,z) - f(x',z)| \leq L_f \|x-x'\|.
$$
\end{proof}

The mapping $\mathcal{R}$ in \eqref{RSBLP} can be used to measure the risk associated with the random variable $F(x)$. 

\begin{definition}
A mapping $\mathcal{R}: \mathcal{X} \to {\color{black}\mathbb{R}}$ defined on some linear subspace $\mathcal{X}$ of $L^0(\Omega, \mathcal{F}, \mathbb{P})$ containing the constants is called a \textbf{convex risk measure} if the following conditions are fulfilled:
\begin{enumerate}[a.]
\item \textbf{(Convexity}) For any $Y_1, Y_2 \in \mathcal{X}$ and $\lambda \in [0,1]$ we have
$$
\mathcal{R}[\lambda Y_1 + (1-\lambda)Y_2] \leq \lambda \mathcal{R}[Y_1] + (1 - \lambda) \mathcal{R}[Y_2].
$$
\item (\textbf{Monotonicity}) $\mathcal{R}[Y_1] \leq \mathcal{R}[Y_2]$ for all $Y_1, Y_2 \in \mathcal{X}$ satisfying $Y_1 \leq Y_2$ with respect to the $\mathbb{P}$-almost sure partial order.
\item (\textbf{Translation equivariance}) $\mathcal{R}[Y + t] = \mathcal{R}[Y] + t$ for all $Y \in \mathcal{X}$ and $t \in \mathbb{R}$.
\end{enumerate}
A convex risk measure $\mathcal{R}$ is \textbf{coherent} if the following holds true:
\begin{enumerate}[d.]
\item (\textbf{Positive homogeneity}) $\mathcal{R}[t Y] = t \cdot \mathcal{R}[Y]$ for all $Y \in \mathcal{X}$ and $t \in [0,\infty)$.
\end{enumerate} 
\end{definition}

\begin{definition}
A mapping $\mathcal{R}: \mathcal{X} \to {\color{black}\mathbb{R}}$ is called \textbf{law-invariant} if for all $Y_1, Y_2 \in \mathcal{X}$ with $\mathbb{P} \circ Y_1^{-1} = \mathbb{P} \circ Y_2^{-1}$ we have $\mathcal{R}[Y_1] = \mathcal{R}[Y_2]$.
\end{definition}

Coherent risk measures have been introduced in \cite{ArtznerDelbaenEberHeath1999}, while the analysis of convex risk measures dates back to \cite{FoellmerSchied2002}. A thorough discussion of their analytical traits is provided in \cite{FoellmerSchied2011}. Below we list some risk measures that are commonly used in stochastic programming (cf. \cite[Sect. 6.3.2]{ShapiroDentchevaRuszczynski2014}).

\begin{example}{Examples}
a. The \textbf{expectation} $\mathbb{E}: L^1(\Omega, \mathcal{F}, \mathbb{P}) \to \mathbb{R}$,
$$
\mathbb{E}[Y] = \int_\Omega Y(\omega)~\mathbb{P}(d\omega)
$$
is a law-invariant and coherent risk measure that turns \eqref{RSBLP} into the \textbf{risk neutral bilevel stochastic program}
$$
\min_x \left\{\mathbb{E}[F(x)] \; | \; x \in X \right\}.
$$

\noindent
b. The \textbf{expected excess of order $p \in [1, \infty)$} over a predefined level $\eta \in \mathbb{R}$ is the mapping $\mathrm{EE}_{\eta}^p: L^p(\Omega, \mathcal{F}, \mathbb{P}) \to \mathbb{R}$ given by
$$
\mathrm{EE}_{\eta}^p[Y] := \Big( \mathbb{E}\big[\max \lbrace Y - \eta, 0 \rbrace^p \big] \Big)^{1/p}.
$$
$\mathrm{EE}_{\eta}^p$ is law-invariant, convex and nondecreasing, but not translation-equivariant and positively homogeneous (cf. \cite[Example 6.22]{ShapiroDentchevaRuszczynski2014}).

\medskip\noindent
c. The \textbf{mean upper semideviation of order $p \in [1, \infty)$} is the mapping \\ $\mathrm{SD}^p_\rho: L^p(\Omega, \mathcal{F}, \mathbb{P}) \to \mathbb{R}$ defined by
$$
\mathrm{SD}_\rho^p[Y] := \mathbb{E}[Y] + \rho \cdot \mathrm{EE}_{\mathbb{E}[Y]}^p[Y] = \mathbb{E}[Y] + \rho \cdot \Big( \mathbb{E}\big[\max \lbrace \mathbb{E}[Y] - \eta, 0 \rbrace^p \big] \Big)^{1/p},
$$
where $\rho \in (0,1]$ is a parameter. $\mathrm{SD}_\rho^p$ is a law-invariant coherent risk measure (cf. \cite[Example 6.20]{ShapiroDentchevaRuszczynski2014}).

\medskip\noindent
d. The \textbf{excess probability} $\mathrm{EP}_\eta: L^0(\Omega, \mathcal{F}, \mathbb{P}) \to \mathbb{R}$ over a prescribed target level $\eta \in \mathbb{R}$ given by
$$
\mathrm{EP}_\eta[Y] = \mathbb{P}[\lbrace \omega \in \Omega \; | \; Y(\omega) > \eta \rbrace],
$$
is nondecreasing and law-invariant. However, it lacks convexity, translation-equiva-riance and positive homogeneity (cf. \cite[Example 2.29]{Claus2016}).

\medskip\noindent
e. The \textbf{Value-at-Risk} $\mathrm{VaR}_\alpha: L^0(\Omega, \mathcal{F}, \mathbb{P}) \to \mathbb{R}$ at level $\alpha \in (0,1)$ defined by
$$
\mathrm{VaR}_{\alpha}[Y] := \inf \lbrace \eta \in \mathbb{R} \; | \; \mathbb{P}[\lbrace \omega \in \Omega \; | \; Y(\omega) \leq \eta \rbrace] \geq \alpha \rbrace
$$
is law-invariant, nondecreasing, translation-equivariant and positively homogeneous, but in general not convex (cf. \cite{Pflug2000}).

\medskip\noindent
f. The \textbf{Conditional Value-at-Risk} $\mathrm{CVaR}_\alpha: L^1(\Omega, \mathcal{F}, \mathbb{P}) \to \mathbb{R}$ at level $\alpha \in (0,1)$ given by 
$$
\mathrm{CVaR}_{\alpha}[Y] := \inf \lbrace \eta + \frac{1}{1-\alpha}\mathrm{EE}_{\eta}^1[Y] \; | \; \eta \in \mathbb{R} \rbrace
$$
is a law-invariant coherent risk measure (cf. \cite[Proposition 2]{Pflug2000}). The variational representation above was established in \cite[Theorem 10]{RockafellarUryasev2002}.

\medskip\noindent
g. The \textbf{entropic risk measure} $\mathrm{Entr}_{\alpha}: L^\infty(\Omega, \mathcal{F}, \mathbb{P}) \to \mathbb{R}$ defined by
$$
\mathrm{Entr}_{\alpha}[Y] := \frac{1}{\alpha} \ln \Big( \mathbb{E} \big[\exp(\alpha Y) \big] \Big),
$$
where $\alpha > 0$ is a parameter, is a law-invariant convex (but not coherent) risk measure (cf. \cite[Example 4.13, Example 4.34]{FoellmerSchied2011}).

\medskip\noindent
h. The \textbf{worst-case risk measure} $\mathcal{R}_{\max}: L^\infty(\Omega, \mathcal{F}, \mathbb{P}) \to \mathbb{R}$ given by
$$
\mathcal{R}_{\max}[Y] := \sup_{\omega \in \Omega} Y(\omega)
$$
is law-invariant and coherent (cf. \cite[Example 4.8]{FoellmerSchied2011}). This choice of $\mathcal{R}$ in \eqref{RSBLP} leads to the \textbf{bilevel robust problem}
$$
\min_x \left\{\mathcal{R}_{\max}[F(x)] \; | \; x \in X \right\}.
$$
Note that $\mathcal{R}_{\max}$ does only depend on the so called \textbf{uncertainty set} $Z(\Omega) \subseteq \mathbb{R}$. Thus, bilevel robust problem can be formulated without knowledge of the distribution of the uncertain parameter. In robust optimization, the uncertainty set is often assumed to be finite, polyhedral or ellipsoidal (cf. \cite{BenTalElGhaouiNemirovski2009}).
\end{example}

\begin{remark}
The set of convex (coherent) risk measures on $L^p(\Omega, \mathcal{F}, \mathbb{P})$ is a convex cone for any fixed $p \in [1, \infty]$. In particular, if $\mathcal{R}: L^p(\Omega, \mathcal{F}, \mathbb{P}) \to \mathbb{R}$ is a convex (coherent) risk measure, then so is $\mathbb{E} + \rho \cdot \mathcal{R}$ for any $\rho \geq 0$. The \textbf{mean-risk bilevel stochastic programming model}
$$
\min_x \left\{\mathbb{E}[F(x)] + \rho \cdot \mathcal{R}[F(x)] \; | \; x \in X \right\}.
$$
seeks to minimize a weighted sum of the expected value of the outcome and a quantification of risk.
\end{remark}

\begin{example}{Example}
Consider the bilevel stochastic problem
$$
\min \left\{ \mathcal{R}\big[\min \Psi(x,Z) \big] \; | \; 1 \leq x \leq 6 \right\},
$$
$$
\Psi(x,z) := \mathrm{Argmin}_y \lbrace -y \; | \; y \geq 1, \; y \leq x + 2 + z_1, \; y \leq -x + 8.5 + z_2 \rbrace
$$
and assume that $Z$ is uniformly distributed over the square $[-0.5,0.5]^2$.

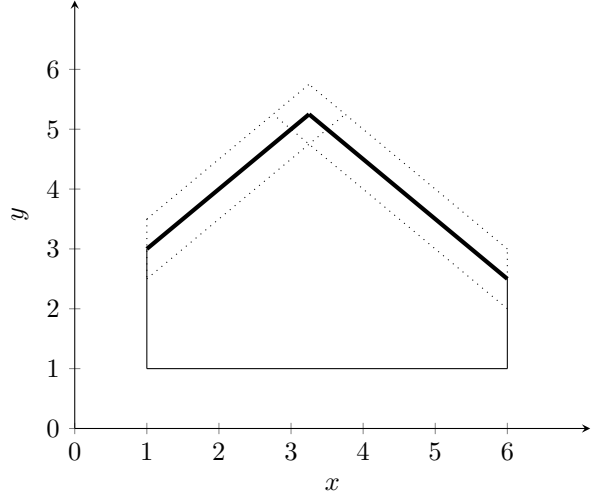
\begin{figure}
\centering
\begin{tikzpicture}[baseline]
\begin{axis}[
xtick={0,1,2,3,4,5,6},
xticklabels={$0$, $1$, $2$, $3$, $4$, $5$, $6$},
ytick={0,1,2,3,4,5,6},
yticklabels={$0$, $1$, $2$, $3$, $4$, $5$, $6$},
xmin=0, xmax=6.5,
ymin=0, ymax=6.5,
xlabel ={$x$},
ylabel ={$y$},
axis x line=bottom, 
axis y line=left,
enlargelimits=upper]
\addplot[color=black] coordinates{(1,1) (6,1)};
\addplot[color=black] coordinates{(1,1) (1,3)};
\addplot[color=black] coordinates{(6,2.5) (6,1)};
\addplot[color=black, ultra thick] coordinates{(1,3) (3.25,5.25)};
\addplot[color=black, ultra thick] coordinates{(3.25,5.25) (6,2.5)};
\addplot[color=black, dotted] coordinates{((1,2.5) (3.75,5.25)};
\addplot[color=black, dotted] coordinates{((6,2) (2.75,5.25)};
\addplot[color=black, dotted] coordinates{((6,3) (3.25,5.75)};
\addplot[color=black, dotted] coordinates{((1,3.5) (3.25,5.75)};
\addplot[color=black, dotted] coordinates{((1,3.5) (1,2.5)};
\addplot[color=black, dotted] coordinates{((6,2.5) (6,3)};
\end{axis}
\end{tikzpicture}
\caption{The bold line depicts the graph of $\Psi(\,\cdot\,, (0, 0))$, while the dotted lines correspond to the graphs of $\Psi(\,\cdot\,, (\pm 0.5, \pm0.5))$ and $\Psi(\,\cdot\,, (\mp 0.5, \pm0.5))$.}
\end{figure}

We have
$$
\Psi(x,z) = \Bigg\{ \begin{matrix} \lbrace x + 2 +z_1 \rbrace &\text{if} \;  x \leq 3.25 + 0.5z_2 - 0.5z_1 \\ \lbrace -x + 8.5 + z_2 \rbrace &\text{else} \end{matrix}
$$
for any $x \in [1,6]$ and $z \in [-0.5,0.5]^2$. A straightforward calculation shows that
\begin{align*}
\mathbb{E}\big[\min \Psi(x,Z) \big] &= \int_{-0.5}^{0.5} \int_{-0.5}^{0.5} x + 2 + z_1~dz_1~dz_2 \\
&= x+2
\end{align*}
holds for any $x \in [1,2.75]$. Similarly, we have
\begin{align*}
\mathbb{E}\big[\min \Psi(x,Z) \big] &= \int_{-0.5}^{2x-6} \int_{-0.5}^{-2x + 6.5 + z_2} x + 2 + z_1~dz_1~dz_2 \\
&+ \int_{2x-6}^{0.5} \int_{-0.5}^{0.5} x + 2 + z_1~dz_1~dz_2 \\
&+\int_{6-2x}^{0.5} \int_{-0.5}^{2x - 6.5 + z_1} -x + 8.5 + z_2~dz_2~dz_1 \\
&= -\frac{4}{3}x^3 + 11x^2 - \frac{117}{4} x + \frac{1427}{48}
\end{align*}
for $x \in [2,75,3.25]$ and
\begin{align*}
\mathbb{E}\big[\min \Psi(x,Z) \big] &= \int_{2x-7}^{0.5} \int_{-0.5}^{-2x + 6.5 + z_2} x + 2 + z_1~dz_1~dz_2 \\
&+\int_{-0.5}^{7-2x} \int_{-0.5}^{2x - 6.5 + z_1} -x + 8.5 + z_2~dz_2~dz_1 \\
&+\int_{7-2x}^{0.5} \int_{-0.5}^{0.5} -x + 8.5 + z_2~dz_2~dz_1 \\
&= \frac{4}{3} x^3 - 15 x^2 + \frac{221}{4} x - \frac{989}{16}
\end{align*}
for $x \in [3.25,3.75]$. Finally, for $x \in [3.75,6]$ we calculate
\begin{align*}
\mathbb{E}\big[\min \Psi(x,Z) \big] &= \int_{-0.5}^{0.5} \int_{-0.5}^{0.5} -x + 8.5 + z_2~dz_2~dz_1 \\
&= -x + 8.5.
\end{align*}
Thus, $\mathbb{E}\big[\min \Psi(\cdot,Z) \big]$ is piecewise polynomial, non-convex and non-differentiable. It is easy to check that $x^\ast = 6$ is a global minimizer of the risk-neutral model
$$
\min \left\{ \mathbb{E}\big[\min \Psi(x,Z) \big] \; | \; 1 \leq x \leq 6 \right\}.
$$
In this particular example, $x^\ast$ is also a global minimizer of the bilevel robust problem
$$
\min \left\{ \mathcal{R}_{max}\big[\min \Psi(x,Z) \big] \; | \; 1 \leq x \leq 6 \right\}.
$$
\end{example}

\subsection{Continuity and Differentiability} \label{SubSecContinuityDifferentiability}

Continuity properties of $\mathcal{R}$ carry over to Lipschitzian properties of $\mathcal{Q}_\mathcal{R}: P_Z \to \mathbb{R}$, $\mathcal{Q}_{\mathcal{R}}(x) := \mathcal{R}[F(x)]$.

\begin{proposition} \label{PropLipschitz}
Assume $\mathrm{dom} \; f \neq \emptyset$ and $\mu_Z \in \mathcal{M}^p_s$ for some $p \in [1,\infty]$. Then the following statements hold true for any $\mathcal{R}: L^p(\Omega, \mathcal{F}, \mathbb{P}) \to \mathbb{R}:$
\begin{enumerate}[a.]
\item $\mathcal{Q}_\mathcal{R}$ is locally Lipschitz continuous if $\mathcal{R}$ is convex and continuous.
\item $\mathcal{Q}_\mathcal{R}$ is locally Lipschitz continuous if $\mathcal{R}$ is convex and nondecreasing.
\item $\mathcal{Q}_\mathcal{R}$ is locally Lipschitz continuous if $\mathcal{R}$ is a convex risk measure.
\item $\mathcal{Q}_\mathcal{R}$ is Lipschitz continuous if $\mathcal{R}$ is Lipschitz continuous.
\item $\mathcal{Q}_\mathcal{R}$ is Lipschitz continuous if $\mathcal{R}$ is a coherent risk measure.
\end{enumerate}
\end{proposition}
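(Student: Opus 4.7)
The plan is to exploit the decomposition $\mathcal{Q}_\mathcal{R} = \mathcal{R} \circ F$ together with Lemma~\ref{LemmaF}, which already supplies $L^p$-Lipschitz continuity of $F: P_Z \to L^p(\Omega, \mathcal{F}, \mathbb{P})$ with some constant $L_F$. Each of the five assertions then reduces to proving a corresponding (local or global) Lipschitz property of $\mathcal{R}$ with respect to the $L^p$-norm, after which the composition rule delivers the analogous property for $\mathcal{Q}_\mathcal{R}$.

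For part (a), I would invoke the classical fact that a real-valued, continuous, convex function on a Banach space is locally Lipschitz on the interior of its effective domain. Since $\mathcal{R}: L^p \to \mathbb{R}$ is finite and continuous, this yields local Lipschitz continuity on all of $L^p$, and (a) follows by composition with $F$. Part (b) reduces to (a) after one establishes that every real-valued, convex, nondecreasing functional on $L^p$ is automatically norm-continuous; this is an extended Namioka--Klee type statement for which I would cite a standard reference (e.g., Ruszczy\'nski--Shapiro or Kaina--R\"uschendorf). Part (c) is a direct corollary of (b), since the convexity and monotonicity axioms of a convex risk measure are exactly the hypotheses needed. Part (d) is immediate: if $\mathcal{R}$ has Lipschitz constant $L_\mathcal{R}$, then
\[
|\mathcal{Q}_\mathcal{R}(x) - \mathcal{Q}_\mathcal{R}(x')| \leq L_\mathcal{R}\, \|F(x) - F(x')\|_{L^p} \leq L_\mathcal{R} L_F \|x-x'\|.
\]

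Part (e) is the substantive step, and the main obstacle. Given (c), $\mathcal{R}$ is locally Lipschitz around $0$, so there exist $\delta, K > 0$ with $\mathcal{R}[Y] \leq K\|Y\|_{L^p}$ whenever $\|Y\|_{L^p} \leq \delta$, using that $\mathcal{R}[0]=0$ follows from positive homogeneity. Scaling an arbitrary $Y \neq 0$ to norm $\delta$ and invoking positive homogeneity upgrades this to the global bound $\mathcal{R}[Y] \leq K\|Y\|_{L^p}$ for every $Y \in L^p$. Combining positive homogeneity with convexity yields subadditivity, whence $\mathcal{R}[Y_1] - \mathcal{R}[Y_2] \leq \mathcal{R}[Y_1-Y_2]$. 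Monotonicity together with $Y_1 - Y_2 \leq |Y_1-Y_2|$ almost surely then gives $\mathcal{R}[Y_1-Y_2] \leq \mathcal{R}[|Y_1-Y_2|] \leq K\|Y_1-Y_2\|_{L^p}$. A symmetric bound produces global Lipschitz continuity of $\mathcal{R}$, and hence of $\mathcal{Q}_\mathcal{R}$ through part (d).

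The genuine work is concentrated in (e), where the local estimate flowing out of (c) must be globalized via the interplay of positive homogeneity, subadditivity, and monotonicity sketched above. A secondary subtlety lies in (b): the automatic continuity of finite convex monotone functionals on $L^p$ is not elementary and the cited result must cover both $p \in [1,\infty)$ and $p = \infty$; once that is in place, the cascade (b)$\Rightarrow$(c) and the trivial reduction (d)$\Rightarrow$(e) tie the argument together.
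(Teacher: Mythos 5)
Your proposal is correct and follows essentially the same route as the paper: parts (a)--(d) use exactly the same chain (composition $\mathcal{Q}_\mathcal{R} = \mathcal{R} \circ F$ with the $L^p$-Lipschitz map from Lemma~\ref{LemmaF}, the classical fact that real-valued continuous convex functionals on a normed space are locally Lipschitz, and the extended Namioka--Klee-type automatic continuity of convex nondecreasing functionals on the Banach lattice $L^p$, for which the paper cites Cheridito--Li). The only divergence is in (e), where the paper simply cites Inoue's lemma that coherent risk measures on $L^p$ are globally Lipschitz, whereas you reproduce its standard proof via positive homogeneity, subadditivity and monotonicity; that argument is correct and self-contained, but it is the same underlying fact.
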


\begin{proof}
a. It is well-known that any real-valued convex and continuous mapping on a normed space is locally Lipschitz continuous (cf. \cite{EkelandTemam1974}). The result is thus an immediate consequence of Lemma~\ref{LemmaF}.

\medskip\noindent
b. Any real-valued, convex and nondecreasing functional on the Banach lattice $L^p(\Omega, \mathcal{F}, \mathbb{P})$ is continuous (see e.g. \cite[Theorem 4.1]{CheriditoLi2009}).

\medskip\noindent
c. By definition, any convex risk measure is convex and nondecreasing.

\medskip\noindent
d. This is a straightforward conclusion from Lemma~\ref{LemmaF}.

\medskip\noindent
e. Any coherent risk measure on $L^p(\Omega, \mathcal{F}, \mathbb{P})$ is Lipschitz continuous by \cite[Lemma 2.1]{Inoue2003}.
\end{proof}

\begin{remark}
Any coherent risk measure $\mathcal{R}: L^\infty (\Omega, \mathcal{F}, \mathbb{P}) \to \mathbb{R}$ is Lipschitz continuous with constant 1 by \cite[Lemma 4.3]{FoellmerSchied2011}. Concrete Lipschitz constants for continuous coherent law-invariant risk measures $\mathcal{R}: L^p (\Omega, \mathcal{F}, \mathbb{P}) \to \mathbb{R}$ with $p \in [1, \infty)$ may be obtained from representation results (see e.g. \cite{BelomestnyKraetschmer2012}).
\end{remark}

Proposition~\ref{PropLipschitz} allows to formulate sufficient conditions for the existence of optimal solutions to the bilevel stochastic linear program \eqref{RSBLP}:

\begin{corollary}
Assume $\mathrm{dom} \; f \neq \emptyset$, $\mu_Z \in \mathcal{M}^p_s$ for some $p \in [1,\infty]$ and let $X \subseteq P_Z$ be nonempty and compact. Then \eqref{RSBLP} is solvable for any convex and nondecreasing mapping $\mathcal{R}: L^p(\Omega, \mathcal{F}, \mathbb{P}) \to \mathbb{R}$.
\end{corollary}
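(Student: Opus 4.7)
The plan is to reduce the claim to a direct application of Weierstrass's extreme value theorem. The hypothesis $X \subseteq P_Z$ ensures that $\mathcal{Q}_\mathcal{R}(x) = \mathcal{R}[F(x)]$ is well-defined for every $x \in X$, using Lemma~\ref{LemmaF} to guarantee that $F(x) \in L^p(\Omega, \mathcal{F}, \mathbb{P})$ so that $\mathcal{R}$ can be evaluated at $F(x)$.

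The key step is to appeal to part (b) of Proposition~\ref{PropLipschitz}: since $\mathcal{R}$ is convex and nondecreasing, $\mathcal{Q}_\mathcal{R}$ is locally Lipschitz continuous on $P_Z$, and in particular continuous on $X$. Since $X$ is nonempty and compact, Weierstrass's theorem immediately yields the existence of a minimizer of $\mathcal{Q}_\mathcal{R}$ over $X$, which is exactly the assertion that \eqref{RSBLP} is solvable.

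There is essentially no obstacle once Proposition~\ref{PropLipschitz}(b) is granted; the only thing worth verifying explicitly is the chain of implications ``convex and nondecreasing'' $\Rightarrow$ ``continuous on $L^p$'' (the content of \cite[Theorem 4.1]{CheriditoLi2009} invoked in the proof of Proposition~\ref{PropLipschitz}(b)) $\Rightarrow$ ``locally Lipschitz composition with the Lipschitz map $F$'' $\Rightarrow$ ``continuous on compact $X$.'' No additional argument regarding coercivity is required, as the compactness of $X$ already suffices to ensure attainment of the infimum.
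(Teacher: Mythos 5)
Your proof is correct and follows exactly the route the paper intends: the corollary is stated as an immediate consequence of Proposition~\ref{PropLipschitz}(b) (continuity of $\mathcal{Q}_\mathcal{R}$ via convexity and monotonicity of $\mathcal{R}$, with Lemma~\ref{LemmaF} guaranteeing well-definedness), combined with the Weierstrass theorem on the nonempty compact set $X$. No discrepancies to report.
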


Due to the lack of convexity, Proposition~\ref{PropLipschitz} and the subsequent Corollary do not apply to the excess probability and the Value-at-Risk. However, invoking Lemma~\ref{Lemmaf}, the arguments used in the proof of \cite[Proposition 3.3]{SchultzTiedemann2003} can adapted to the setting of bilevel stochastic linear programming:

\begin{proposition}
Assume $\mathrm{dom} \; f \neq \emptyset$ and fix $\eta \in \mathbb{R}$, then $\mathcal{Q}_{\mathrm{EP}_\eta}$ is lower semicontinuous on $P_Z$ and continuous at any $x \in P_Z$ satisfying
$$
\mu_Z[\lbrace z \in \mathbb{R} \; | \; f(x,z) = \eta \rbrace] = 0.
$$
Furthermore, let $X \subseteq P_Z$ be nonempty and compact. Then
$$
\min_x \left\{ \mathrm{EP}_\eta [F(x)] \; | \; x \in X \right\}
$$
is solvable.
\end{proposition}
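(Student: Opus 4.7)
My plan is to rewrite $\mathcal{Q}_{\mathrm{EP}_\eta}(x) = \mu_Z\bigl[\{z \in \mathbb{R}^s : f(x,z) > \eta\}\bigr]$ and to control the $x$-dependence of these super-level sets via the Lipschitz continuity of $f$ on $P$ granted by Lemma~\ref{Lemmaf}. Since $X \subseteq P_Z$ and $(x,z) \in P$ for all $x \in P_Z$ and $z \in \mathrm{supp}\,\mu_Z$, the map $z \mapsto f(x,z)$ is continuous on $\mathrm{supp}\,\mu_Z$ for every $x \in P_Z$, which ensures measurability of the level sets in question.

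For lower semicontinuity at $x \in P_Z$, I would fix a sequence $x_n \to x$ in $P_Z$ and establish the pointwise bound
$$
\liminf_{n \to \infty} \mathbf{1}_{\{f(x_n,\cdot) > \eta\}}(z) \geq \mathbf{1}_{\{f(x,\cdot) > \eta\}}(z).
$$
The only nontrivial case is $z$ with $f(x,z) > \eta$, where the Lipschitz estimate $|f(x_n,z) - f(x,z)| \leq L_f \|x_n - x\|$ from Lemma~\ref{Lemmaf} eventually pushes $f(x_n,z)$ above $\eta$. Fatou's lemma then delivers $\liminf_n \mathcal{Q}_{\mathrm{EP}_\eta}(x_n) \geq \mathcal{Q}_{\mathrm{EP}_\eta}(x)$.

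The more delicate half is upper semicontinuity at any $x$ satisfying $\mu_Z[\{f(x,\cdot) = \eta\}] = 0$. By the same Lipschitz argument, $f(x,z) < \eta$ forces $f(x_n,z) < \eta$ for $n$ large, so pointwise
$$
\limsup_{n \to \infty} \mathbf{1}_{\{f(x_n,\cdot) > \eta\}}(z) \leq \mathbf{1}_{\{f(x,\cdot) \geq \eta\}}(z).
$$
Since the integrands are dominated by the $\mu_Z$-integrable constant $1$, the reverse Fatou lemma yields $\limsup_n \mathcal{Q}_{\mathrm{EP}_\eta}(x_n) \leq \mu_Z[\{f(x,\cdot) \geq \eta\}]$. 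This is where the mass-zero hypothesis enters: it allows me to replace $\geq$ by $>$ on the right, matching $\mathcal{Q}_{\mathrm{EP}_\eta}(x)$. This step is the main technical obstacle of the proof — without the hypothesis, upper semicontinuity can genuinely fail whenever the distribution charges the $\eta$-level set, which is precisely the phenomenon responsible for the discontinuity of excess probability in general.

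For the existence claim, lower semicontinuity of $\mathcal{Q}_{\mathrm{EP}_\eta}$ on $P_Z \supseteq X$ together with nonemptiness and compactness of $X$ yields solvability directly by the Weierstrass extreme value theorem for lower semicontinuous functions on compact sets.
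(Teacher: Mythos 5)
Your proof is correct and is exactly the adaptation of \cite[Proposition 3.3]{SchultzTiedemann2003} that the paper invokes without spelling out: representing $\mathcal{Q}_{\mathrm{EP}_\eta}(x)$ as $\mu_Z[\{z : f(x,z)>\eta\}]$, using the Lipschitz continuity of $f$ from Lemma~\ref{Lemmaf} to get pointwise semicontinuity of the indicators, Fatou and reverse Fatou for the two inequalities, the mass-zero hypothesis to pass from $\{f(x,\cdot)\geq\eta\}$ to $\{f(x,\cdot)>\eta\}$, and Weierstrass for solvability. No gaps.
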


$\mathcal{Q}_{\mathrm{VaR}_\alpha}$ has been analyzed in \cite[Theorem 2]{Ivanov2014}:

\begin{proposition}
Assume $\mathrm{dom} \; f \neq \emptyset$ and $\alpha \in (0,1)$, then $\mathcal{Q}_{\mathrm{VaR}_\alpha}$ is continuous. Moreover, let $X \subseteq P_Z$ be nonempty and compact. Then
$$
\min_x \left\{\mathrm{VaR}_\alpha[F(x)] \; | \; x \in X \right\}
$$
is solvable.
\end{proposition}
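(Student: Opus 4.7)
My plan is to bypass the usual distributional continuity subtleties associated with the Value-at-Risk by exploiting the almost-sure uniform Lipschitz dependence of $F$ on $x$ that Lemma~\ref{Lemmaf} actually delivers, independently of any moment assumption on $\mu_Z$. For any $x, x' \in P_Z$ and any $z \in \mathrm{supp}\,\mu_Z$, both $(x,z)$ and $(x',z)$ lie in $P$ by definition of $P_Z$, so Lemma~\ref{Lemmaf} yields
$$
|f(x,z) - f(x',z)| \leq L_f \|x-x'\|,
$$
and consequently $|F(x) - F(x')| \leq L_f \|x-x'\|$ $\mathbb{P}$-almost surely.

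Next, I would record two elementary properties of $\mathrm{VaR}_\alpha$ read straight off the definition. \textbf{Monotonicity:} if $Y_1 \leq Y_2$ $\mathbb{P}$-a.s., then for every $\eta$ one has $\mathbb{P}[Y_2 \leq \eta] \leq \mathbb{P}[Y_1 \leq \eta]$, so the defining sublevel sets in the infimum nest accordingly and $\mathrm{VaR}_\alpha[Y_1] \leq \mathrm{VaR}_\alpha[Y_2]$. \textbf{Translation equivariance:} $\mathrm{VaR}_\alpha[Y+t] = \mathrm{VaR}_\alpha[Y] + t$ follows from a change of variable $\eta \mapsto \eta + t$ in the defining infimum. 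Combining these with the a.s.\ sandwich $F(x') - L_f\|x-x'\| \leq F(x) \leq F(x') + L_f\|x-x'\|$ yields
$$
\bigl|\mathcal{Q}_{\mathrm{VaR}_\alpha}(x) - \mathcal{Q}_{\mathrm{VaR}_\alpha}(x')\bigr| \leq L_f \|x-x'\|,
$$
so that $\mathcal{Q}_{\mathrm{VaR}_\alpha}$ is in fact Lipschitz continuous on $P_Z$, which is strictly stronger than continuity as claimed. The output $\mathrm{VaR}_\alpha[F(x)]$ is real-valued for $\alpha \in (0,1)$ because $F(x)$ is $\mathbb{P}$-a.s.\ finite by Lemma~\ref{Lemmaf}; no $L^p$-integrability hypothesis on $Z$ is needed.

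For the second assertion I would simply invoke the Weierstrass extremum theorem: $X$ is nonempty and compact, $X \subseteq P_Z$, and $\mathcal{Q}_{\mathrm{VaR}_\alpha}$ is continuous on $P_Z$, so the minimum is attained.

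The only conceptual hurdle is that, in contrast to the convex setting of Proposition~\ref{PropLipschitz}, $\mathrm{VaR}_\alpha$ is neither convex nor continuous with respect to weak or $L^p$ convergence of random variables, so none of the coherent/convex risk-measure machinery employed earlier applies directly. This is exactly compensated for by the strong almost-sure uniform bound from Lemma~\ref{Lemmaf}, which is atypical for general stochastic programs and is available here precisely because the randomness enters only the right-hand side of the lower-level constraints. I therefore expect the full argument to be short; the only detail warranting care is the verification of the two definitional properties of $\mathrm{VaR}_\alpha$, which are however entirely standard.
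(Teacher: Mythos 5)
Your argument is correct, and it differs from the paper, which offers no proof at all for this proposition but instead defers to \cite[Theorem 2]{Ivanov2014}. Your route is self-contained and in fact yields a stronger conclusion: since $x,x'\in P_Z$ and $z\in\mathrm{supp}\,\mu_Z$ imply $(x,z),(x',z)\in P$, Lemma~\ref{Lemmaf} gives the uniform almost-sure bound $|F(x)-F(x')|\le L_f\|x-x'\|$, and then monotonicity plus translation equivariance of $\mathrm{VaR}_\alpha$ (both of which you verify correctly from the definition, and which hold on all of $L^0$ with no integrability assumption) turn this into global Lipschitz continuity of $\mathcal{Q}_{\mathrm{VaR}_\alpha}$ on $P_Z$ with the same constant $L_f$; real-valuedness for $\alpha\in(0,1)$ follows since the distribution function of any a.s.\ finite random variable tends to $0$ and $1$ at $\mp\infty$. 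This is essentially the observation that any monotone, translation-equivariant functional is $1$-Lipschitz with respect to the $L^\infty$-norm --- the same mechanism behind Proposition~\ref{PropLipschitz}e.\ and the $p=\infty$ case of Lemma~\ref{LemmaF} --- and you rightly note that convexity, which $\mathrm{VaR}_\alpha$ lacks and which the paper's general machinery requires, is not needed for it; only the coupling of $F(x)$ and $F(x')$ on the same probability space matters, which is available here because the randomness enters only through the right-hand side. The Weierstrass step for solvability is then immediate. What the paper's citation-based approach buys is coverage of settings where one only has continuity rather than a sup-norm Lipschitz estimate; what your approach buys is a two-line quantitative proof and an explicit Lipschitz constant.
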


For specific risk measures, sufficient conditions for differentiability of $\mathcal{Q}_\mathcal{R}$ have been investigated in \cite{BurtscheidtClausDempe2019}.

\begin{proposition} \label{PropDifferentiability}
Assume $\mathrm{dom} \; f \neq \emptyset$ and that $\mu_Z \in \mathcal{M}^1_s$ is absolutely continuous with respect to the Lebesgue measure. Fix any $\eta \in \mathbb{R}$, then $Q_\mathbb{E}$ and $Q_{\mathrm{EE}_\eta^1}$ are continuously differentiable at any $x_0 \in \mathrm{int} \; P_Z$. Furthermore, for any $\rho \in [0,1)$, $Q_{\mathrm{SD}_\rho^1}$ is continuously differentiable at any $x_0 \in \mathrm{int} \; P_Z$ satisfying $Q_\mathbb{E}(x_0) \neq 0$.
\end{proposition}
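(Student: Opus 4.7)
The plan is to exploit the piecewise affine structure of $f$ in $z$ together with the smoothing effect of an absolutely continuous $\mu_Z$, and to differentiate under the integral sign throughout. The structural backbone is the decomposition $f(x,z) = c^\top x + h(Tx+z)$, where
$$h(w) := \min_y\{q^\top y \; | \; y \in \mathrm{Argmin}_y\{d^\top y \; | \; Ay \leq w\}\}$$
is the inner bilevel value function in the $s$-dimensional right-hand side. By parametric linear programming theory $h$ is real-valued, Lipschitz, and piecewise affine on a finite polyhedral subdivision of its domain; hence the set $N$ on which $h$ fails to be differentiable is a finite union of lower-dimensional polyhedra and therefore Lebesgue-null. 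For any fixed $x_0 \in \mathrm{int}\, P_Z$ the translate $N - Tx_0$ is still Lebesgue-null, so absolute continuity of $\mu_Z$ gives $\mu_Z(N - Tx_0) = 0$.

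For $Q_\mathbb{E}$, I would write $Q_\mathbb{E}(x) = c^\top x + \int h(Tx+z)\,\phi(z)\,dz$, where $\phi$ denotes the Lebesgue density of $\mu_Z$, and differentiate under the integral using dominated convergence: for any direction $v$, the difference quotient $t^{-1}[h(T(x_0+tv)+z) - h(Tx_0+z)]$ converges $\mu_Z$-almost everywhere to $\nabla h(Tx_0+z)^\top Tv$ by the preceding paragraph, and is dominated by $L_h\|Tv\|$. This yields
$$\nabla Q_\mathbb{E}(x) = c + T^\top \int_{\mathbb{R}^s} \nabla h(Tx+z)\,\phi(z)\,dz = c + T^\top \int_{\mathbb{R}^s} \nabla h(w)\,\phi(w - Tx)\,dw.$$
Continuity of $x \mapsto \nabla Q_\mathbb{E}(x)$ then follows from continuity of translation in $L^1(\mathbb{R}^s)$ combined with essential boundedness of $\nabla h$. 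The same blueprint produces $Q_{\mathrm{EE}_\eta^1} \in C^1$ when applied to the integrand $\max\{f(x,z)-\eta,0\}$: the extra non-smoothness along $\{z : f(x,z) = \eta\}$ is either an $(s-1)$-dimensional hyperplane (on cells where $h$ has non-zero affine slope in $z$) or it lies in a cell on which the integrand vanishes identically, so in either subcase the exceptional locus is Lebesgue-null, and dominated convergence delivers
$$\nabla Q_{\mathrm{EE}_\eta^1}(x) = \int_{\{z \, : \, f(x,z) > \eta\}} \nabla_x f(x,z)\,\phi(z)\,dz.$$

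For $Q_{\mathrm{SD}_\rho^1}$, observe that $Q_{\mathrm{SD}_\rho^1}(x) = Q_\mathbb{E}(x) + \rho\, G(x, Q_\mathbb{E}(x))$ with $G(x,\eta) := \mathbb{E}[\max\{F(x) - \eta, 0\}]$. Upgrading the preceding step to joint continuous differentiability of $G$ on $\mathrm{int}\, P_Z \times \mathbb{R}$, with $\partial_\eta G(x,\eta) = -\mu_Z[\{z \,:\, f(x,z) > \eta\}]$ (continuous by the same null-boundary argument, applied uniformly in a neighborhood of $(x_0, Q_\mathbb{E}(x_0))$), the chain rule then delivers $Q_{\mathrm{SD}_\rho^1} \in C^1$. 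The hypothesis $Q_\mathbb{E}(x_0) \neq 0$ enters as a non-degeneracy condition ensuring that the plug-in level $\eta = Q_\mathbb{E}(x_0)$ sits away from a critical value at which an additional kink would be introduced by the composition.

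The main technical obstacle is the measure-theoretic control of the non-differentiability loci of $f(x,\cdot)$ and of the level set $\{z : f(x,z) = \eta\}$: everything hinges on transferring the Lebesgue-null nature of the breakpoint polyhedra of $h$ to $\mu_Z$-null status after fibering by $x$, which only absolute continuity of $\mu_Z$ makes possible. For the $\mathrm{SD}$ case there is the added subtlety that the cut-off level $\eta = Q_\mathbb{E}(x)$ itself depends on $x$, so the slicing argument must be made uniform on a whole neighborhood of $x_0$ before the chain rule can be invoked.
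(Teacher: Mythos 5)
The chapter does not actually prove this proposition: it is quoted from \cite{BurtscheidtClausDempe2019} (see the sentence immediately preceding it and the remark following it), so there is no in-paper argument to compare against line by line. Your overall strategy --- write $f(x,z)=c^\top x+h(Tx+z)$ with $h$ piecewise affine and Lipschitz by parametric LP theory, observe that its non-differentiability locus is a Lebesgue-null union of lower-dimensional polyhedra, push the null set through the translation by $Tx_0$ using absolute continuity of $\mu_Z$, and differentiate under the integral by dominated convergence with the Lipschitz constant as dominating function --- is the natural route and is sound for $Q_\mathbb{E}$, including the continuity of the gradient via continuity of translation in $L^1$.

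There is, however, a genuine gap in your treatment of $Q_{\mathrm{EE}_\eta^1}$. You dismiss the case where the level set $\{z\,:\,f(x_0,z)=\eta\}$ has positive Lebesgue measure by saying the integrand ``vanishes identically'' on such a cell. That the integrand vanishes at $x_0$ does not give differentiability there: if $h$ is constant on a full-dimensional cell $C$ with $f(x_0,\cdot)\equiv\eta$ on $C$, then for $z\in C$ one has $\nabla_x f(x_0,z)=c+T^\top\nabla h(Tx_0+z)=c$, and the one-sided directional derivative of $x\mapsto\int_C\max\{f(x,z)-\eta,0\}\,\phi(z)\,dz$ at $x_0$ equals $\mu_Z(C)\cdot\max\{c^\top v,0\}$, which is not linear in $v$ unless $c^\top v$ vanishes or $\mu_Z(C)=0$. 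So your argument needs an additional ingredient: either a proof that under the standing assumptions $h$ cannot be constant on a set of positive $\mu_Z\circ(Tx_0+\cdot)^{-1}$-measure at level $\eta-c^\top x_0$ (equivalently $\mu_Z[\{z:f(x_0,z)=\eta\}]=0$), or this must be imposed as a hypothesis --- which is precisely the kind of ``more involved sufficient condition'' that Theorems 3.7--3.9 of \cite{BurtscheidtClausDempe2019} are said to supply. The same unproved null-level-set property is silently used again when you claim $\partial_\eta G(x,\eta)=-\mu_Z[\{z:f(x,z)>\eta\}]$ is continuous near $(x_0,Q_\mathbb{E}(x_0))$ in the $\mathrm{SD}^1_\rho$ step; and your reading of the hypothesis $Q_\mathbb{E}(x_0)\neq 0$ as a generic ``non-degeneracy of the plug-in level'' is a guess rather than an argument --- as written, your chain-rule decomposition would go through at any $x_0$ where $G$ is jointly $C^1$, so you have not actually identified where that hypothesis is consumed.
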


\begin{remark}
Theorems 3.7, 3.8 and 3.9 in \cite{BurtscheidtClausDempe2019} provide more involved sufficient conditions for continuous differentiability of $Q_\mathbb{E}$, $Q_{\mathrm{EE}_\eta^1}$ and $Q_{\mathrm{SD}_\rho^1}$ that do not require $\mu_Z$ to be absolutely continuous.
\end{remark}

\begin{remark}
Note that the assumptions of Proposition~\ref{PropDifferentiability} are not fulfilled in the example at the end of Subsect.~\ref{SubSecBilevelStochasticLinearModels}: The right-hand side of the restriction system is only \textbf{partially random} as the right-hand side of the restriction $y \geq 1$ does not depend on $Z$. If we extended system to
$$
y \leq x + 2 + Z_1', \; y \leq -x + 8.5 + Z_2', \; y \geq 1 + + Z_3',
$$
the third component of the extended random vector $Z'$ has to take the value $0$ with probability $1$. Thus, $\mathbb{P} \circ Z'^{-1}$ is not absolutely continuous with respect to the Lebesgue measure.
\end{remark}

In the presence of differentiability, necessary  optimality conditions for \eqref{RSBLP} can be formulated in terms of directional derivatives (cf. \cite[Corollary 3.10]{BurtscheidtClausDempe2019}).

\begin{proposition}
Assume $\mathrm{dom} \; f \neq \emptyset$, $\mu_Z \in \mathcal{M}^p_s$ and $X \subseteq P_Z$. Furthermore, let $x_0 \in X$ be a local minimizer of problem \eqref{RSBLP} and assume that $Q_\mathcal{R}$ is differentiable at $x_0$. Then
$$
Q'_\mathcal{R}(x_0)v \geq 0
$$
holds for any feasible direction
$$
v \in \lbrace v \in \mathbb{R}^n \; | \; \exists \epsilon_0 > 0: \; x_0 + \epsilon v \in X \; \forall \epsilon \in [0,\epsilon_0] \rbrace.
$$
\end{proposition}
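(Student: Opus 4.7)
The plan is to carry out the standard first-order necessary condition argument, namely to combine the local minimum inequality with a limit along the admissible ray $x_0 + \varepsilon v$, using only the differentiability of $Q_\mathcal{R}$ at $x_0$.

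First, I would fix any feasible direction $v$. By the very definition of the cone on the right-hand side, there exists $\varepsilon_0 > 0$ with $x_0 + \varepsilon v \in X$ for every $\varepsilon \in [0,\varepsilon_0]$. Since $x_0$ is a local minimizer of \eqref{RSBLP}, there is a neighborhood $U$ of $x_0$ such that $Q_\mathcal{R}(x) \geq Q_\mathcal{R}(x_0)$ for every $x \in X \cap U$. Choosing $\varepsilon_1 \in (0,\varepsilon_0]$ small enough that $x_0 + \varepsilon v \in U$ for all $\varepsilon \in [0,\varepsilon_1]$ (possible because $\varepsilon \mapsto x_0 + \varepsilon v$ is continuous at $0$), I obtain the inequality
$$
Q_\mathcal{R}(x_0 + \varepsilon v) - Q_\mathcal{R}(x_0) \geq 0 \qquad \text{for all } \varepsilon \in [0,\varepsilon_1].
$$

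Next, I would divide by $\varepsilon > 0$ and pass to the limit $\varepsilon \downarrow 0$. Since $Q_\mathcal{R}$ is (Fréchet) differentiable at $x_0$, the difference quotient converges to the directional derivative in the sense
$$
\lim_{\varepsilon \downarrow 0} \frac{Q_\mathcal{R}(x_0 + \varepsilon v) - Q_\mathcal{R}(x_0)}{\varepsilon} = Q'_\mathcal{R}(x_0) v,
$$
and preservation of the inequality $\geq 0$ under this limit yields $Q'_\mathcal{R}(x_0) v \geq 0$, as required.

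There is no substantive obstacle here; the only point that deserves a word of care is that the feasible direction cone is defined via a one-sided segment condition, so one must work with the one-sided limit $\varepsilon \downarrow 0$ rather than a two-sided derivative — but this is exactly what differentiability of $Q_\mathcal{R}$ at $x_0$ delivers, since the Fréchet derivative determines all one-sided directional limits. No structural properties of $f$, $\mathcal{R}$, or $\mu_Z$ beyond what is already encoded in the hypothesis ``$Q_\mathcal{R}$ is differentiable at $x_0$'' enter the argument.
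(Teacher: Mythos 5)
Your argument is correct and complete: it is the standard first-order necessary optimality condition, obtained by restricting $Q_\mathcal{R}$ to the admissible ray, invoking local minimality for small $\varepsilon$, and passing to the one-sided limit of the difference quotient, which exists and equals $Q'_\mathcal{R}(x_0)v$ by differentiability. The paper itself gives no proof for this proposition and merely cites Corollary 3.10 of the reference by Burtscheidt, Claus and Dempe, so there is nothing to contrast; your reasoning is exactly the expected one.
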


\subsection{Stability} \label{SubSecStability}

While we have only considered $\mathcal{Q}_{\mathcal{R}}$ as a functions of the leader's decision $x$ so far, it also depends on the underlying probability measure $\mu_Z$. In stochastic programming, incomplete information about the true underlying distribution or the need for computational efficiency may lead to optimization models that employ an approximation of $\mu_Z$. This section analysis deals with the behaviour of optimal values and (local) optimal solution sets of \eqref{RSBLP} under perturbations of the underlying distribution.

\medskip

Taking into account that the support of the perturbed measure may differ from the original support, we shall assume $\mathrm{dom} \; f \neq \emptyset$ and
$$
P = \mathbb{R}^n \times \mathbb{R}^s
$$
to ensure that the objective function of \eqref{RSBLP} remains well defined. The corresponding assumption in two-stage stochastic programming is called \textbf{complete recourse} (cf. \cite[Sect. 2.1.3]{ShapiroDentchevaRuszczynski2014}). Sufficient conditions for $\mathrm{dom} \; f \neq \emptyset$ and $P = \mathbb{R}^n \times \mathbb{R}^s$ are given in \cite[Corollary 1]{Ivanov2014} and \cite[Corollary 2]{Ivanov2014}. The following characterization is a direct consequence of Gordan's Lemma (cf. \cite{Gordan1873}):

\begin{lemma}
$P = \mathbb{R}^n \times \mathbb{R}^s$ holds if and only if $u = 0$ is the only non-negative solution to $A^\top u = 0$.
\end{lemma}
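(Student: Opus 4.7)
The plan is to reduce the condition $P = \mathbb{R}^n \times \mathbb{R}^s$ to a feasibility statement about the matrix $A$ alone and then invoke Gordan's Lemma. First, I would observe that since $b := Tx + z$ ranges over all of $\mathbb{R}^s$ as $(x,z)$ varies over $\mathbb{R}^n \times \mathbb{R}^s$ (for instance, by fixing $x = 0$ and letting $z$ sweep out $\mathbb{R}^s$), the condition $P = \mathbb{R}^n \times \mathbb{R}^s$ is equivalent to the statement that $\{y \in \mathbb{R}^m \mid Ay \leq b\}$ is nonempty for every $b \in \mathbb{R}^s$. This strips $T$ and $x$ out of the picture.

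Next, I would show that this universal feasibility is equivalent to the existence of some $y^\ast \in \mathbb{R}^m$ with $Ay^\ast < 0$ componentwise. The direction $(\Leftarrow)$ is a scaling argument: given any $b \in \mathbb{R}^s$, the vector $ty^\ast$ satisfies $A(t y^\ast) = t \cdot Ay^\ast \leq b$ for all sufficiently large $t > 0$, because every coordinate of $Ay^\ast$ is strictly negative. The direction $(\Rightarrow)$ is immediate by taking $b = -\mathbf{1}$ and reading off strict inequality.

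Finally, Gordan's Lemma provides the desired alternative: the homogeneous strict inequality system $Ay < 0$ has a solution $y \in \mathbb{R}^m$ if and only if the system $A^\top u = 0$, $u \geq 0$, $u \neq 0$ has no solution, i.e.\ if and only if $u = 0$ is the only non-negative solution to $A^\top u = 0$. Chaining the three equivalences above yields the claim.

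The proof is essentially routine given Gordan's Lemma; the only mildly subtle point is the scaling argument that converts universal feasibility of $Ay \leq b$ into strict feasibility of $Ay < 0$, and verifying that $(x,z) \mapsto Tx+z$ is indeed surjective onto $\mathbb{R}^s$ without any nontrivial hypothesis on $T$.
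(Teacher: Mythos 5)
Your proof is correct and follows exactly the route the paper indicates: it presents this lemma without a written proof, remarking only that it is ``a direct consequence of Gordan's Lemma,'' and your argument (reducing $P = \mathbb{R}^n \times \mathbb{R}^s$ to universal feasibility of $Ay \leq b$, then to strict feasibility of $Ay < 0$ via scaling, then applying Gordan) is the natural filling-in of that citation. Nothing further is needed.
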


Throughout this section, we shall consider the situation that $\mathcal{R}: L^p(\Omega, \mathcal{F}, \mathbb{P}) \to \mathbb{R}$ with $p \in [1,\infty)$ is law-invariant, convex and nondecreasing. Furthermore, for the sake of notational simplicity (cf. \cite[Remark 4.1]{BurtscheidtClausDempe2019}), we assume that the probability space $(\Omega, \mathcal{F}, \mathbb{P})$ is atomless, i.e. for any $A \in \mathcal{F}$ with $\mathbb{P}[A] > 0$ there exists some $B \in \mathcal{F}$ with $B \subsetneq A$ and $\mathbb{P}[A] > \mathbb{P}[B] > 0$.

\medskip

Then for any $x \in X$ and $\mu \in \mathcal{M}^p_s$, we have $(\delta_x \otimes \mu) \circ f^{-1} \in \mathcal{M}^p_1$, where $\delta_x \in \mathcal{P}(\mathbb{R}^n)$ denotes the Dirac measure at $x$. The atomlessness of $(\Omega, \mathcal{F}, \mathbb{P})$ ensures that there exists some $Y_{(x,\mu)} \in L^p(\Omega, \mathcal{F}, \mathbb{P})$ such that $\mathbb{P} \circ Y_{(x,\mu)}^{-1} = (\delta_x \otimes \mu) \circ f^{-1}$. Thus, we may consider the mapping $\mathcal{Q}_\mathcal{R}: X \times \mathcal{M}^p_s \to \mathbb{R}$ defined by
$$
\mathcal{Q}_\mathcal{R}(x,\mu) := \mathcal{R}[Y_{(x,\mu)}].
$$
Note that the specific choice of $Y_{(x,\mu)}$ does not matter due to the law-invariance of $\mathcal{R}$.

\medskip

Consider the parametric optimization problem
\begin{equation}
\label{ParametricProblem}
\tag{$\mathrm{P}_\mu$}
\min_x \lbrace \mathcal{Q}_\mathcal{R}(x,\mu) \; | \; x \in X \rbrace.
\end{equation}
As \eqref{ParametricProblem} may be non-convex, we shall pay special attention to sets of local optimal solutions. For any open set $V \subseteq \mathbb{R}^n$ we introduce the localized optimal value function $\varphi_V: \mathcal{M}^p_s \to \overline{\mathbb{R}}$,
$$
\varphi_V(\mu) := \min_x \lbrace \mathcal{Q}_\mathcal{R}(x,\mu) \; | \; x \in X \cap \mathrm{cl} \; V \rbrace,
$$
as well as the localized optimal solution set mapping $\phi_V: \mathcal{M}^p_s \rightrightarrows \mathbb{R}^n$,
$$
\phi_V(\mu) := \underset{x}{\mathrm{Argmin}} \lbrace \mathcal{Q}_\mathcal{R}(x,\mu) \; | \; x \in X \cap \mathrm{cl} \; V \rbrace.
$$
It is well known that additional assumptions are needed when studying stability of local solutions.

\begin{definition}
Given $\mu \in \mathcal{M}^p_s$ and an open set $V \subseteq \mathbb{R}^n$, $\phi_V(\mu)$ is called a \textbf{complete local minimizing (CLM) set} of \eqref{ParametricProblem} w.r.t. $V$ if $\emptyset \neq \phi_V(\mu) \subseteq V$.
\end{definition}

\begin{remark}
The set of global optimal solutions $\phi_{\mathbb{R}^n}(\mu)$ and any set of isolated minimizers are CLM sets. However, sets of strict local minimizers may fail to be CLM sets (cf. \cite{Robinson1987}).
\end{remark}

In the following, we shall equip $\mathcal{P}(\mathbb{R}^s)$ with the topology of weak convergence, i.e. the topology where a sequence $\{\mu_l\}_{l \in \mathbb{N}} \subset \mathcal{P}(\mathbb{R}^s)$ converges weakly to $\mu \in \mathcal{P}(\mathbb{R}^s)$, written $\mu_l \stackrel{w}{\rightarrow} \mu$, if and only if
$$
	\lim_{l \to \infty} \int_{\mathbb{R}^s} h(t)~\mu_l(dt) = \int_{\mathbb{R}^s} h(t)~\mu(dt)
$$
holds for any bounded continuous function $h:\mathbb{R}^s \to \mathbb{R}$ (cf. \cite{Billingsley1968}). The example below (cf. \cite[Example 3.2]{Claus2016}) shows that even $\varphi_{\mathbb{R}^n}$ may fail to be weakly continuous on the entire space $\mathcal{P}(\mathbb{R}^s)$.

\begin{example}{Example}
The problem
$$
\min_x \left\{x + \int_{\mathbb{R}} z~\mu(dz) \; | \; 0 \leq x \leq 1 \right\}
$$
arises from a bilevel stochastic linear problem, where $\mathcal{R} = \mathbb{E}$ and $\Psi(x,z) = \{z\} \subsetneq \mathbb{R}$ holds for any $(x,z)$. Assume that $\mu$ is the Dirac measure at $0$, then the above problem can be rewritten as 
$$
\min_x \{x \; | \; 0 \leq x \leq 1\}
$$
and its optimal value is $0$.
\medskip

However, while the sequence $\mu_l := (1 - \frac{1}{l}) \delta_{0} + \frac{1}{l} \delta_l$ converges weakly to $\delta_0$, replacing $\mu$ with $\mu_l$ yields the problem
$$
\min_x \left\{x + 1 \; | \; 0 \leq x \leq 1 \right\},
$$
whose optimal value is equal to $1$ for any $l \in \mathbb{N}$.
\end{example}

We shall follow the approach of \cite{BurtscheidtClausDempe2019}, \cite{Claus2016} and \cite{ClausKraetschmerSchultz2017} and confine the stability analysis to locally uniformly $\|\cdot\|^p$-integrating sets.

\begin{definition}
A set $\mathcal{M} \subseteq \mathcal{M}^p_s$ is said to be \textbf{locally uniformly $\|\cdot\|^p$-integrating} if for any $\epsilon > 0$ there exists some open neighborhood $\mathcal{N}$ of $\mu$ w.r.t. the topology of weak convergence such that
$$
\lim_{a \to \infty} \sup_{\nu \in \mathcal{M} \cap \mathcal{N}} \int_{\mathbb{R}^s \setminus a \mathbb{B}} \|z\|^p~\nu(dz) \leq \epsilon.
$$
\end{definition}

A detailed discussion of locally uniformly $\|\cdot\|^p$-integrating sets and their generalizations is provided in \cite{FoellmerSchied2011}, \cite{KraetschmerSchiedZaehle2012}, \cite{KraetschmerSchiedZaehle2014}, and \cite{KraetschmerSchiedZaehle2017}. The following examples demonstrate the relevance of the concept.

\begin{example}{Examples}
a. Fix $\kappa, \epsilon > 0$. Then by \cite[Corollary A.47, (c)]{FoellmerSchied2011}, the set
$$
\mathcal{M}(\kappa, \epsilon) := \left\{ \mu \in \mathcal{P}(\mathbb{R}^s) \; | \; \int_{\mathbb{R}^s} \|z\|^{p + \epsilon}~\mu(dz) \leq \kappa \right\}
$$
of Borel probability measures with uniformly bounded moments of order $p + \epsilon$ is locally uniformly $\|\cdot\|^p$-integrating.

\medskip\noindent
b. Fix any compact set $\Xi \subset \mathbb{R}^s$. By \cite[Corollary A.47, (b)]{FoellmerSchied2011}, the set
$$
\{\mu \in \mathcal{P}(\mathbb{R}^s) \; | \; \mu[\Xi] = 1\}
$$
of Borel probability measures whose support is contained in $\Xi$ is locally uniformly $\|\cdot\|^p$-integrating.
\end{example}
The following result has been established in \cite[Theorem 4.7]{BurtscheidtClausDempe2019}:

\begin{theorem} \label{TheoremStability}
Assume $\mathrm{dom} \; f \neq \emptyset$ and $P = \mathbb{R}^n \times \mathbb{R}^s$. Let $\mathcal{M} \subseteq \mathcal{M}^{p}_{s}$ be locally uniformly $\|\cdot\|^{p}$-integrating, then
\begin{enumerate}[a.]
\item $\mathcal{Q}_\mathcal{R}|_{\mathbb{R}^n \times \mathcal{M}}$ is real-valued and weakly continuous.	
\item $\varphi_{\mathbb{R}^n}|_\mathcal{M}$ is weakly upper semicontinuous.
\end{enumerate}

\noindent
In addition, assume that $\mu_0 \in \mathcal{M}$ is such that $\phi_V(\mu_0)$ is a CLM set of $P_{\mu_0}$ w.r.t. some open bounded set $V \subsetneq \mathbb{R}^n$. Then the following statements hold true:
\begin{enumerate}[c.]
\item $\varphi_V|_\mathcal{M}$ is weakly continuous at $\mu_0$.
\item[d.] $\phi_V|_\mathcal{M}$ is weakly upper semicontinuous at $\mu_0$ in the sense of Berge (cf. \cite{Berge1959}), i.e. for any open set $\mathcal{O} \subseteq \mathbb{R}^{n}$ with $\phi|_V(\mu_0) \subseteq \mathcal{O}$ there exists a weakly open neighborhood $\mathcal{N}$ of $\mu_0$ such that $\phi_V(\mu) \subseteq \mathcal{O}$ for all $\mu \in \mathcal{N} \cap \mathcal{M}$.
\item[e.] There exists some weakly open neighborhood $\mathcal{U}$ of $\mu_0$ such that $\phi_V(\mu)$ is a CLM set for \eqref{ParametricProblem} w.r.t. $V$ for any $\mu \in \mathcal{U} \cap \mathcal{M}$.
\end{enumerate}
\end{theorem}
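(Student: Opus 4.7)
The plan is to split the theorem into two packages: statement (a), which is the analytic heart of the result, and statements (b)--(e), which are parametric-optimization consequences of (a). Throughout, the law-invariance of $\mathcal{R}$ together with the atomlessness of $(\Omega, \mathcal{F}, \mathbb{P})$ lets us regard $\mathcal{Q}_\mathcal{R}(x,\mu)$ purely as a functional of the pushforward measure $(\delta_x \otimes \mu) \circ f^{-1}$ on $\mathbb{R}$, so the task reduces to showing joint weak continuity of this pushforward under perturbations of $(x,\mu)$, verifying that the resulting family inherits uniform tail control, and then invoking a continuity result for $\mathcal{R}$ on such families.

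For (a), I would fix $(x_0, \mu_0) \in \mathbb{R}^n \times \mathcal{M}$ and a sequence $(x_l, \mu_l) \to (x_0, \mu_0)$ with $\mu_l \in \mathcal{M}$. Weak convergence $\delta_{x_l} \otimes \mu_l \xrightarrow{w} \delta_{x_0} \otimes \mu_0$ on $\mathbb{R}^n \times \mathbb{R}^s$ follows by a standard product-measure argument (cf.\ \cite{Billingsley1968}), and combined with the global continuity of $f$ on $P = \mathbb{R}^n \times \mathbb{R}^s$ granted by Lemma~\ref{Lemmaf} the continuous mapping theorem yields $(\delta_{x_l} \otimes \mu_l) \circ f^{-1} \xrightarrow{w} (\delta_{x_0} \otimes \mu_0) \circ f^{-1}$ on $\mathbb{R}$. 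The Lipschitz bound $|f(x,z)| \leq |f(0,0)| + L_f(\|x\| + \|z\|)$ then transports the locally uniformly $\|\cdot\|^p$-integrating property from $\mathcal{M}$ to the pushforward family: the inclusion $\{|f(x,z)| > a\} \subseteq \{\|z\| > (a - L_f\|x\| - |f(0,0)|)/L_f\}$ allows one to dominate the tail integrals on $\mathbb{R}$ by tail integrals of $\mu_l$ times a bounded constant, and these vanish uniformly as $a \to \infty$ by hypothesis. Applying the continuity theorem for law-invariant convex risk measures on $L^p$ along weakly convergent, locally uniformly $\|\cdot\|^p$-integrating families (see \cite{KraetschmerSchiedZaehle2012, KraetschmerSchiedZaehle2014, KraetschmerSchiedZaehle2017}) delivers $\mathcal{Q}_\mathcal{R}(x_l, \mu_l) \to \mathcal{Q}_\mathcal{R}(x_0, \mu_0)$, while real-valuedness is a direct consequence of Lemma~\ref{LemmaF}. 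Part (b) then follows by writing $\varphi_{\mathbb{R}^n}$ as a pointwise infimum over $x \in X$ of the continuous functions $\mu \mapsto \mathcal{Q}_\mathcal{R}(x,\mu)$, so upper semicontinuity is automatic. For (c)--(e), the CLM assumption $\emptyset \neq \phi_V(\mu_0) \subseteq V$ with $V$ open and bounded ensures that the localized problem is attained strictly inside the compact constraint set $X \cap \mathrm{cl}\, V$; combining this with the joint continuity from (a), a standard Berge-type stability theorem for CLM sets (cf.\ \cite{Robinson1987}) produces weak continuity of $\varphi_V|_\mathcal{M}$ at $\mu_0$, Berge upper semicontinuity of $\phi_V|_\mathcal{M}$ at $\mu_0$, and the CLM preservation on a suitable weakly open neighborhood $\mathcal{U}$ of $\mu_0$ (the latter by shrinking the neighborhood in (d) until $\phi_V(\mu) \subseteq V$ is forced).

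The main obstacle is the joint continuity step in (a), and within it the verification that the pushforward family inherits locally uniform $\|\cdot\|^p$-integrating behaviour on $\mathbb{R}$. This step crucially couples the assumption $P = \mathbb{R}^n \times \mathbb{R}^s$, which legitimises the global Lipschitz constant $L_f$, with the integrating hypothesis on $\mathcal{M}$; the example preceding the theorem shows that dropping the latter already breaks continuity of the risk-neutral objective. Once (a) is in place, the stability consequences (b)--(e) are essentially bookkeeping via standard parametric optimization arguments and the definition of a CLM set.
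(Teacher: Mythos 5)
Your proposal is correct and follows essentially the same route as the paper: the paper's entire proof consists of verifying the linear growth estimate $|f(x,z)| \leq L\|z\| + L\|x - x_0\| + |f(x_0,0)|$ via Lemma~\ref{Lemmaf} and the assumption $P = \mathbb{R}^n \times \mathbb{R}^s$, and then invoking \cite[Corollary 2.4]{ClausKraetschmerSchultz2017}, which packages all of (a)--(e). What you have written out by hand --- weak continuity of the pushforward $(\delta_x \otimes \mu) \circ f^{-1}$, transfer of the locally uniform $\|\cdot\|^p$-integrating property through the growth bound, the Kr\"atschmer--Schied--Z\"ahle continuity theory for law-invariant convex nondecreasing functionals, and the Berge/Robinson-type CLM stability arguments --- is precisely the content of that cited corollary, so your argument is the same proof with the black box opened.
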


\begin{proof}
Fix any $x_0 \in \mathbb{R}^n$. By Lemma~\ref{Lemmaf}, $f$ is Lipschitz continuous on $\mathbb{R}^n \times \mathbb{R}^s$. Thus, there exists a constant $L > 0$ such that
$$
	|f(x,z)| \leq L \|z\| + L\|x-x_0\| + |f(x_0, 0)|
$$
and the result follows from \cite[Corollary 2.4.]{ClausKraetschmerSchultz2017}.
\end{proof}

\begin{remark}
Under the assumptions of Theorem~\ref{TheoremStability}d., any accumulation point $x$ of a sequence local optimal solutions $x_l \in \phi_V(\mu_l)$ as $\mu_l \stackrel{w}{\rightarrow} \mu$, $\mu_l \in \mathcal{M}$, is a local optimal solution of \eqref{ParametricProblem}. A detailed discussion of Berge's notion of upper semicontinuity and related concepts is provided in \cite[Chap. 5]{RockafellarWets2009}.
\end{remark}

As any Borel probability measure is the weak limit of a sequence of measures having finite support, Theorem~\ref{TheoremStability} justifies an approach where the true underlying measure is approximated by a sequence of finite discrete ones. It is well known that approximation schemes based on discretization via empirical estimation (\cite{Pollard1984}, \cite{ShorackWellner1986}) or conditional expectations (\cite{BirgeWets1986}, \cite{KallRuszczynskiFrauendorfer1988}) produce weakly converging sequences of discrete probability measures under mild assumptions.

\medskip
\textbf{Attention} \textit{All results of Subsects.~\ref{SubSecPreliminaries} to \ref{SubSecStability} can be easily extended to the pessimistic approach to bilevel stochastic linear programming, where $f$ takes the form
$$
f(x,z) = c^\top x - \min_y \{- q^\top y \; | \; y \in \Psi(x,z)\}
$$
(cf. \cite[Chap. 4]{Claus2016}).}

\subsection{Stochastic Dominance Constraints}
\label{SubSecStoDom}

One possibility to model the minimization in
$$
\min \lbrace f(x,Z(\cdot)) \; | \; x \in X \rbrace 
$$
is doing it w.r.t. some risk measure that maps $f(x,Z(\cdot))$ into the reals, as introduced in Sect.~\ref{SecBilevelStochasticLinearOptimization}. In this section, we shall discuss an alternate approach, where a disutility function $g: \mathbb{R}^n \to \mathbb{R}$ is minimized over some subset of random variables of acceptable risk:
$$
	\min_x \left\{g(x) \; | \; x \in X,\; f(x, Z(\cdot)) \in \mathcal{A}\right\},
$$
where $\mathcal{A} \subseteq f(X,Z) := \{f(x, Z(\cdot)) \; | \; x \in X\}$. The following cases are of particular interest (cf. \cite[pp. 90--91]{ShapiroDentchevaRuszczynski2014}) :

\begin{example}{Examples}
a. $\mathcal{A}$ is given by \textbf{probabilistic constraints}, i.e.
$$
\mathcal{A} = \{h \in f(X,Z) \; | \; \mathbb{P}[h \leq \beta_j] \geq p_j \; \forall j= 1, \ldots, l\}
$$
for bounds $\beta_1, \ldots, \beta_l \in \mathbb{R}$ and safety levels $p_1, \ldots, p_l \in (0,1)$.

\medskip\noindent
b. $\mathcal{A}$ is given by \textbf{first-order stochastic dominance constraints}, i.e.
$$
\mathcal{A} = \{h \in f(X,Z) \; | \; \mathbb{P}[h \leq \beta] \geq \mathbb{P}[b \leq \beta] \; \forall \beta \in \mathbb{R}\},
$$
where $b \in L^{0}(\Omega, \mathcal{F}, \mathbb{P})$ is a given benchmark variable. If $b$ is discrete with a finite number of realizations, it is sufficient to impose the relation $\mathbb{P}[h \leq \beta] \geq  \mathbb{P}[b \leq \beta]$ for any $\beta$ in a finite subset of $\mathbb{R}$. In this case, $\mathcal{A}$ admits a description by a finite system of probabilistic constraints.

\medskip\noindent
c. $\mathcal{A}$ is given by \textbf{second-order stochastic dominance constraints}, i.e.
$$
\mathcal{A} = \{h \in f(X,Z) \; | \; \mathbb{E}[\max\{h - \eta, 0\}] \leq \mathbb{E}[\max\{b - \eta, 0\}] \; \forall \eta \in \mathbb{R}\},
$$
where $b \in L^{1}(\Omega, \mathcal{F}, \mathbb{P})$ is a given benchmark variable.
\end{example}

A discussion of general models involving probabilistic or stochastic dominance constraints can be found in \cite[Chap. 8]{ShapiroDentchevaRuszczynski2014} and \cite[Chap. 8.3]{Prekopa1995}.

\medskip

Let $\nu := \mathbb{P} \circ b^{-1} \in \mathcal{P}(\mathbb{R})$ denote the distribution of the benchmark variable $b$. Then the feasible set under first-order stochastic dominance constraints admits the representation
$$
\left\{ x \in X \; | \; \mu_Z\big[ \lbrace z \in \mathbb{R}^s \; | \; f(x,z) \leq \beta \rbrace \big] \geq \nu\big[ \lbrace b \in \mathbb{R} \; | \; b \leq \beta \rbrace \big] \; \forall \beta \in \mathbb{R} \right\}.
$$
Similarly, for second-order stochastic dominance constraints, $\mu \in \mathcal{M}^1_s$ and $\nu \in \mathcal{M}^1_1$, the feasible set takes the form
$$
\left\{ x \in X \, | \int_{\mathbb{R}^s} \max \lbrace f(x,z) - \eta, 0 \rbrace~\mu_Z(dz) \leq \int_{\mathbb{R}} \max \lbrace b - \eta, 0 \rbrace~\nu(db) \; \forall \eta \in \mathbb{N} \right\}.
$$
In both cases, the feasible does only depend on the distribution of the underlying random vector. As in Subsect.~\ref{SubSecStability}, we consider situations where $\mu_Z$ is replaced with an approximation and study the behaviour of the mappings $\mathcal{C}_1: \mathbb{P}(\mathbb{R}^s) \rightrightarrows \mathbb{R}^n$ defined by
$$
\mathcal{C}_1(\mu) = \left\{ x \in X \; | \; \mu \big[ \lbrace z \in \mathbb{R}^s \; | \; f(x,z) \leq \beta \rbrace \big] \geq \nu\big[ \lbrace b \in \mathbb{R} \; | \; b \leq \beta \rbrace \big] \; \forall \beta \in \mathbb{R} \right\}.
$$
and $\mathcal{C}_2: \mathcal{M}^1_s \rightrightarrows \mathbb{R}^n$ given by
\small{$$
C_2(\mu) := \left\{ x \in X \, | \int_{\mathbb{R}^s} \max \lbrace f(x,z) - \eta, 0 \rbrace~\mu(dz) \leq \int_{\mathbb{R}} \max \lbrace b - \eta, 0 \rbrace~\nu(db) \, \forall \eta \in \mathbb{N} \right\}.
$$}\normalsize

Invoking Lemma~\ref{Lemmaf}, the following result can be obtained by adapting the proofs of \cite[Proposition 2.1]{GollmerNeiseSchultz2008} and \cite[Proposition 2.2]{GollmerGotzesSchultz2011} :

\begin{proposition} \label{PropStoDom}
Assume $\mathrm{dom} \; f \neq \emptyset$ and $P = \mathbb{R}^n \times \mathbb{R}^s$. Then the following statements hold true:
\begin{enumerate}[a.]
\item The multifunction $C_1$ is closed w.r.t. the topology of weak convergence, i.e. for any sequences $\lbrace \mu_{l} \rbrace_{l} \subset \mathcal{P}(\mathbb{R}^{s})$ and $\lbrace x_{l} \rbrace_{l} \subset \mathbb{R}^{n}$ with $\mu_{l} \stackrel{w}{\rightarrow} \mu \in \mathcal{P}(\mathbb{R}^{s})$, $x_{l} \rightarrow x \in \mathbb{R}^{n}$ for $l \rightarrow \infty$ and $x_{l} \in C_1(\mu_{l})$ for all $l \in \mathbb{N}$ it holds true that $x \in C_1(\mu)$.
\item Additionally assume that $\nu \in \mathcal{M}^1_1$, then the multifunction $C_2$ is closed w.r.t. the topology of weak convergence.
\end{enumerate}
\end{proposition}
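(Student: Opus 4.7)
The plan is to use that $\mathrm{dom}\; f \neq \emptyset$ together with $P = \mathbb{R}^n \times \mathbb{R}^s$ puts Lemma~\ref{Lemmaf} to work on all of $\mathbb{R}^n \times \mathbb{R}^s$, so $f$ is globally Lipschitz with some constant $L$; in particular $\sup_{z \in \mathbb{R}^s} |f(x_l, z) - f(x, z)| \leq L \|x_l - x\| \to 0$. Closedness of $X$ (a polyhedron) gives $x \in X$ immediately, so the substantive task in both parts is to pass to the limit in the distributional constraints defining $C_i(\mu_l)$.

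For part (a), I would introduce the pushforwards $\rho_l := \mu_l \circ f(x_l, \cdot)^{-1}$ and $\rho := \mu \circ f(x, \cdot)^{-1}$ on $\mathbb{R}$ and first show $\rho_l \stackrel{w}{\rightarrow} \rho$. By the bounded Lipschitz characterisation of weak convergence, it suffices to check $\int \phi(f(x_l, z))\,\mu_l(dz) \to \int \phi(f(x, z))\,\mu(dz)$ for every bounded Lipschitz $\phi:\mathbb{R}\to\mathbb{R}$. The uniform-in-$z$ estimate above lets us replace $\phi(f(x_l, \cdot))$ by $\phi(f(x, \cdot))$ modulo a vanishing error, and $\int \phi(f(x, z))\,\mu_l(dz) \to \int \phi(f(x, z))\,\mu(dz)$ follows from weak convergence because $\phi \circ f(x, \cdot)$ is bounded continuous. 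Consequently the CDFs $G_l(\beta) := \rho_l((-\infty, \beta])$ converge to $G(\beta) := \rho((-\infty, \beta])$ at every continuity point of $G$. Setting $H(\beta) := \nu((-\infty, \beta])$ and fixing $\beta \in \mathbb{R}$, I would pick a sequence of continuity points $\beta_k \downarrow \beta$ of $G$ (possible since $G$ has at most countably many discontinuities). From $x_l \in C_1(\mu_l)$ we have $G_l(\beta_k) \geq H(\beta_k)$; passing $l \to \infty$ gives $G(\beta_k) \geq H(\beta_k)$, and letting $k \to \infty$ with right-continuity of both $G$ and $H$ yields $G(\beta) \geq H(\beta)$, which is exactly $x \in C_1(\mu)$.

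For part (b), fix $\eta$ and set $h_l(z) := \max\{f(x_l, z) - \eta, 0\}$ and $h(z) := \max\{f(x, z) - \eta, 0\}$. Both are nonnegative and continuous, and the global Lipschitz bound gives $\|h_l - h\|_\infty \leq L\|x_l - x\| =: \epsilon_l \to 0$. Since $h$ is nonnegative and lower semicontinuous, the Portmanteau theorem applied to $\mu_l \stackrel{w}{\rightarrow} \mu$ yields $\int h\, d\mu \leq \liminf_l \int h\, d\mu_l$. Combining with the uniform bound $\int h\, d\mu_l \leq \int h_l\, d\mu_l + \epsilon_l$ and the feasibility inequality $\int h_l\, d\mu_l \leq \int \max\{b - \eta, 0\}\, \nu(db)$ (finite because $\nu \in \mathcal{M}^1_1$) then produces $\int h\, d\mu \leq \int \max\{b - \eta, 0\}\, \nu(db)$, so $x \in C_2(\mu)$. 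The main obstacle here is that the $h_l$ are unbounded (growing linearly in $\|z\|$) and no moment hypothesis is imposed on the $\mu_l$, so one cannot invoke the usual convergence of integrals of bounded continuous integrands directly; the escape is that only one-sided control is needed, and the l.s.c.\ Portmanteau lower bound together with uniform convergence $h_l \to h$ delivers precisely that side without requiring convergence of the integrals themselves.
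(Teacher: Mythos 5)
Your proof is correct and follows essentially the route the paper intends: the paper gives no explicit argument but states that the result follows by invoking Lemma~\ref{Lemmaf} (global Lipschitz continuity of $f$ on $P=\mathbb{R}^n\times\mathbb{R}^s$) and adapting the proofs of Proposition 2.1 in \cite{GollmerNeiseSchultz2008} and Proposition 2.2 in \cite{GollmerGotzesSchultz2011}, which is exactly what you carry out --- weak convergence of the pushforwards plus right-continuity of the distribution functions for $C_1$, and the one-sided Portmanteau bound for lower semicontinuous nonnegative integrands combined with the uniform bound $\|h_l-h\|_\infty\leq L\|x_l-x\|$ for $C_2$. Your closing remark correctly identifies the only delicate point (unboundedness of the integrands in part b) and resolves it the same way the cited reference does.
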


By considering the constant sequence $\mu_l = \mu$ for all $l \in \mathbb{N}$ we obtain the closedness of the sets $C_1(\mu)$ and $C_2(\mu)$ under the conditions of Proposition~\ref{PropStoDom}. The closedness of the multifunctions $C_1$ and $C_2$ is also the key to proving the following stability result (cf. \cite[Propostition 2.5]{GollmerNeiseSchultz2008}):

\begin{theorem}
Assume $\mathrm{dom} \; f \neq \emptyset$, $P = \mathbb{R}^n \times \mathbb{R}^s$ and that $X$ is nonempty and compact. Moreover, let $g$ be lower semicontinuous. Then the following statements hold true:
\begin{enumerate}[a.]
\item The optimal value function $\varphi_1: \mathcal{P}(\mathbb{R}^{s}) \to \mathbb{R} \cup \lbrace \infty \rbrace$ given by
$$
\varphi_1(\mu) := \inf \lbrace g(x) \; | \; x \in C_1(\mu)\rbrace
$$
is weakly lower semicontinuous on $\mathrm{dom} \; C_1$.
\item Additionally assume $\nu \in \mathcal{M}^1_1$, then the function $\varphi_2: \mathcal{M}^1_s \to \mathbb{R} \cup \lbrace \infty \rbrace$ given by
$$
\varphi_2(\mu) := \inf \lbrace g(x) \; | \; x \in C_2(\mu)\rbrace
$$
is weakly lower semicontinuous on $\mathrm{dom} \; C_2$.
\end{enumerate}
\end{theorem}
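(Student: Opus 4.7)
The plan is to reduce the statement to the closed-graph property of $C_1$ and $C_2$ established in Proposition~\ref{PropStoDom}, combined with compactness of $X$ and lower semicontinuity of $g$. I would treat both parts a.\ and b.\ uniformly, writing $C_i$ and $\varphi_i$ for $i \in \{1,2\}$; for $i=2$ I restrict to $\mathcal{M}^1_s$ and use the extra assumption $\nu \in \mathcal{M}^1_1$ only through Proposition~\ref{PropStoDom}.b.

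To verify weak lower semicontinuity at a point $\mu_0 \in \mathrm{dom}\; C_i$, I would fix a sequence $\{\mu_l\}_l$ with $\mu_l \stackrel{w}{\rightarrow} \mu_0$ (in $\mathcal{P}(\mathbb{R}^s)$ for $i=1$, in $\mathcal{M}^1_s$ for $i=2$) and aim at $\varphi_i(\mu_0) \leq \liminf_l \varphi_i(\mu_l)$. After discarding the trivial case $\liminf_l \varphi_i(\mu_l) = +\infty$ and passing to a subsequence realising the liminf, I may assume the limit is finite and $\mu_l \in \mathrm{dom}\; C_i$ for all large $l$.

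For each such $l$, observe that $C_i(\mu_l) \subseteq X$ is nonempty; applying Proposition~\ref{PropStoDom} to the constant sequence $\mu_l$ shows $C_i(\mu_l)$ is closed, hence a compact subset of the compact set $X$. Since $g$ is lsc, it attains its infimum over $C_i(\mu_l)$, so I would pick $x_l \in C_i(\mu_l)$ with $g(x_l) = \varphi_i(\mu_l)$. Compactness of $X$ lets me extract a further subsequence with $x_l \to x_* \in X$, and Proposition~\ref{PropStoDom} applied now to the varying sequence $(\mu_l, x_l)$ yields $x_* \in C_i(\mu_0)$.

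The conclusion is then immediate: $\varphi_i(\mu_0) \leq g(x_*) \leq \liminf_l g(x_l) = \liminf_l \varphi_i(\mu_l)$, where the second inequality uses lsc of $g$ at $x_*$ and the equality is by choice of subsequence. I do not expect a real obstacle here, since the substantive content — closedness of the feasible-set multifunctions under weak convergence — is already supplied by Proposition~\ref{PropStoDom}; the only care needed is to perform the successive subsequence extractions in the right order (first to realise the liminf, then to pass to a limit of the minimisers) and to treat the two cases $i=1$ and $i=2$ in a unified manner.
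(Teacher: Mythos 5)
Your argument is correct and follows exactly the route the paper indicates: it defers to the closedness of $C_1$ and $C_2$ from Proposition~2.14 (citing the proof of \cite[Proposition 2.5]{GollmerNeiseSchultz2008}), which is precisely your scheme of extracting a subsequence realising the liminf, selecting minimisers on the compact sets $C_i(\mu_l) \subseteq X$ via lower semicontinuity of $g$, passing to a convergent subsequence, and invoking the closed-graph property to place the limit point in $C_i(\mu_0)$. No gaps; the subsequence bookkeeping and the treatment of the $+\infty$ case are handled properly.
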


\subsection{Finite Discrete Distributions}
\label{SubSecFiniteDiscreteDistributions}

Throughout this section, we shall assume that the underlying random vector $Z$ is discrete with a finite number of realizations $Z_1, \ldots, Z_K \in \mathbb{R}^s$ and respective probabilities $\pi_1, \ldots, \pi_K \in (0,1]$. Let $I$ denote the index set $\lbrace 1, \ldots, K \rbrace$, then $P_Z$ takes the form
$$
P_Z = \lbrace x \in \mathbb{R}^n \; | \; \forall k \in I \; \exists y \in \mathbb{R}^m: \; Ay \leq Tx + Z_k \rbrace.
$$
Suppose that $x_0 \in X$ is such that $\lbrace y \in \mathbb{R}^m \; | \; Ay \leq Tx_0 + Z_k \rbrace = \emptyset$ holds for some $k \in I$. Then the probability of $f(x_0,Z(\omega)) = \infty$ is a least $\pi_k > 0$, i.e. $x_0$ should be considered as infeasible for problem \eqref{RSBLP}. Consequently, $X \subseteq P_Z$ can be understood as an \textbf{induced constraint}. Note that $X \cap P_Z$ is a polyhedron if $X$ is a polyhedron.

\medskip

In this setting, the bilevel stochastic linear problem can be reduced to a standard bilevel program, which allows to adapt optimality conditions and algorithms designed for the deterministic case (cf. \cite{Dempe2002}). 

\begin{proposition}\label{PropositionSD_discretemodels}
Assume $\mathrm{dom} \; f \neq \emptyset$, $\mathcal{R} \in \{\mathbb{E}$, $\mathrm{EE}_\eta^1$, $\mathrm{SD}^1_\rho$, $\mathrm{EP}_\eta$, $\mathrm{VaR}_\alpha$, $\mathrm{CVaR}_\alpha$, $\mathcal{R}_{\max}\}$ and let $X \subseteq P_Z$ be a polyhedron. If $\mathcal{R} \in \lbrace \mathrm{EP}_{\eta}, \mathrm{VaR}_{\alpha} \rbrace$, additionally assume that $X$ bounded. Then for any parameter $\beta$, there exists a constant $M > 0$ such that the bilevel stochastic linear problem
$$
\min_x \left\{ \mathcal{R}[F(x)] \; | \; x \in X \right\}
$$
is equivalent to the standard bilevel program
$$
\min_x \Big\{\aunderbrace[l0D0r@{\hspace{2.6em}}]{\ \;\inf_{\eta \in \mathbb{R}}\ \min_w \{a}_{\text{if}\ \mathcal{R} = \mathrm{CVaR}_{\alpha}}(x, w) \; | \; w \in \Psi_{\mathcal{R}}(x)\}| \; x \in X\Big\},\ \text{or}
$$
$$
\min_x \Big\{c^\top x + \inf_{\eta \in \mathbb{R}}\big\{\eta \; | \; \min_w \{a(x, w) \; | \; w \in \Psi_{\mathcal{R}}(x)\} \geq \alpha\big\}| \; x \in X\Big\}\quad \text{if}\ \mathcal{R}= \mathrm{VaR}_{\alpha},
$$
where
$$
\Psi_{\mathcal{R}}(x) := \underset{w}{\mathrm{Argmin}} \; \left\{\sum_{k \in I}d^\top y_k \; | \; Ay_k \leq Tx + Z_k,\;b_k(x, w) \geq 0\ \forall k \in I\right\}.
$$
The specific formulations can be found in Table~\ref{TableRM_discretemodels}.

\renewcommand{\arraystretch}{1.2}
\begin{table}
\caption{Equivalent bilevel linear programs}
\label{TableRM_discretemodels}
\begin{tabular}{lllll}
\hline\noalign{\smallskip}
$\mathcal{R}$&$\beta$&$w$&$a(x, w)$&$b(x, w)$\\
\noalign{\smallskip}\hline\hline\noalign{\smallskip}
$\!\!\!\mathbb{E}\!\!\!$&&$\!\!\!(y_1, \ldots, y_K) \in \mathbb{R}^{Km}\!\!\!$&$\!\!\!\!\!\!c^\top x + \sum_{k \in I}\pi_k q^\top y_k\!\!\!$&\\ \hline
$\!\!\!\mathrm{EE}^1_{\eta}\!\!\!$&$\!\!\!\eta \in \mathbb{R}\!\!\!$&$\!\!\!\!\!\!\begin{array}{l} (y_1, \ldots, y_K) \in \mathbb{R}^{Km}\\(v_1, \ldots, v_K) \in \mathbb{R}^{K}\end{array}\!\!\!$&$\!\!\!\!\!\!\sum_{k \in I}\pi_k v_k\!\!\!$&$\!\!\!\!\!\!\left(\!\!\!\begin{array}{l} v_k\\v_k - c^\top x - q^\top y_k + \eta\end{array}\!\!\!\right)\!\!\!$\\ \hline
$\!\!\!\mathrm{SD}^1_\rho\!\!\!$&$\!\!\!\rho \in (0, 1]\!\!\!\!\!\!$&$\!\!\!\!\!\!\begin{array}{l} (y_1, \ldots, y_K) \in \mathbb{R}^{Km}\\(v_1, \ldots, v_K) \in \mathbb{R}^{K}\end{array}\!\!\!$&$\!\!\!\!\!\!\!\!\!\begin{array}{l}(1 - \rho)\sum_{k \in I}\pi_k q^\top y_k\\+ \rho\sum_{k \in I}\pi_k v_k + c^\top x\end{array}\!\!\!$&$\!\!\!\!\!\!\left(\!\!\!\begin{array}{l}v_k - q^\top y_k\\v_k - \sum_{j \in I}\pi_j q^\top y_j\end{array}\!\!\!\right)$\\ \hline
$\!\!\!\mathrm{EP}_\eta\!\!\!$&$\!\!\!\eta \in \mathbb{R}\!\!\!$&$\!\!\!\!\!\!\begin{array}{l} (y_1, \ldots, y_K) \in \mathbb{R}^{Km}\\(\theta_1, \ldots, \theta_K) \in \{0, 1\}^{K}\end{array}\!\!\!$&$\!\!\!\!\!\!\sum_{k \in I}\pi_k \theta_k\!\!\!$&$\!\!\!\!\!\!M\theta_k - c^\top x - q^\top y_k + \eta\!\!\!$\\ \hline
$\!\!\!\mathrm{VaR}_{\alpha}\!\!\!\!\!\!$&$\!\!\!\alpha \in (0, 1)\!\!\!$&$\!\!\!\!\!\!\begin{array}{l} (y_1, \ldots, y_K) \in \mathbb{R}^{Km}\\(\theta_1, \ldots, \theta_K) \in \{0, 1\}^{K}\end{array}\!\!\!$&$\!\!\!\!\!\!\sum_{k \in I}\pi_k \theta_k\!\!\!$&$\!\!\!\!\!\!\!\!\!\begin{array}{l}M(1 - \theta_k) - c^\top x\\ - q^\top y_k + \eta\end{array}\!\!\!$\\ \hline
$\!\!\!\mathrm{CVaR}_{\alpha}\!\!\!$&$\!\!\!\alpha \in (0,1)\!\!\!$&$\!\!\!\!\!\!\begin{array}{l} (y_1, \ldots, y_K) \in \mathbb{R}^{Km}\\(v_1, \ldots, v_K) \in \mathbb{R}^{K}\end{array}\!\!\!\!\!\!$&$\!\!\!\!\!\!\eta + \frac{1}{1 - \alpha}\sum_{k \in I}\pi_k v_k\!\!\!$&\!\!\!\!\!\!cf. $\mathrm{EE}^1_{\eta}\!\!\!$ \\ \hline
$\!\!\!\mathcal{R}_{\max}\!\!\!\!\!\!$&&$\!\!\!(y_1, \ldots, y_K) \in \mathbb{R}^{Km}\!\!\!$&$\!\!\!\!\!\!\max_{k \in I} c^\top x + q^\top y_k\!\!\!$& \\
\noalign{\smallskip}\hline\noalign{\smallskip}
\end{tabular}
\end{table}
\end{proposition}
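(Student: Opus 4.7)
The plan is to establish the equivalence separately for each row of Table~\ref{TableRM_discretemodels}, via a common structural decomposition followed by a case analysis. First I would note that under $X \subseteq P_Z$, for each $x \in X$ the random variable $F(x)$ is discrete with atoms $F_k(x) := c^\top x + \min_y\{q^\top y \mid y \in \Psi(x, Z_k)\}$ of probability $\pi_k$. The lower-level problem defining $\Psi_\mathcal{R}(x)$ is fully separable across $k \in I$: its objective $\sum_k d^\top y_k$ and all constraints $Ay_k \leq Tx + Z_k$, $b_k(x, w) \geq 0$ couple only within a single block $(y_k, v_k)$ or $(y_k, \theta_k)$. Moreover the auxiliary variables $v_k, \theta_k$ do not appear in the follower's objective, so any $w \in \Psi_\mathcal{R}(x)$ satisfies $y_k \in \Psi(x, Z_k)$ for every $k \in I$, while $v_k$ (or $\theta_k$) ranges freely over values compatible with $b_k(x, w) \geq 0$.

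In the optimistic formulation the leader then selects $w \in \Psi_\mathcal{R}(x)$ minimizing $a(x, w)$. Each of the seven risk measures in the table is monotone, and for any $w \in \Psi_\mathcal{R}(x)$ one has $F_k(x) \leq c^\top x + q^\top y_k$; hence an induced-monotonicity argument shows that the leader's optimal choice always achieves $c^\top x + q^\top y_k = F_k(x)$ realization-wise, with $v_k$ or $\theta_k$ driven to its extreme feasible value. It then remains to verify row by row that $a(x, w^*) = \mathcal{R}[F(x)]$ at the leader's optimum. For $\mathbb{E}$ and $\mathcal{R}_{\max}$ the identity is immediate. For $\mathrm{EE}^1_\eta$ the leader picks $v_k = \max\{0, c^\top x + q^\top y_k - \eta\}$, giving $\sum_k \pi_k v_k = \mathbb{E}[\max\{F(x) - \eta, 0\}]$. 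For $\mathrm{SD}^1_\rho$ the constraints $v_k \geq q^\top y_k$ and $v_k \geq \sum_j \pi_j q^\top y_j$ force $v_k = \max\{q^\top y_k, \sum_j \pi_j q^\top y_j\}$ at the optimum, and a short algebraic rearrangement reduces $a(x, w^*)$ to $\mathbb{E}[F(x)] + \rho\,\mathbb{E}[\max\{F(x) - \mathbb{E}[F(x)], 0\}]$. For $\mathrm{CVaR}_\alpha$ one reduces the inner $\min_w$ to the $\mathrm{EE}^1_\eta$ case with $\eta$ as a free parameter and then invokes the variational representation listed in the examples. For $\mathrm{EP}_\eta$ and $\mathrm{VaR}_\alpha$ the big-$M$ inequalities $b_k \geq 0$ combined with $\theta_k \in \{0,1\}$ force $\theta_k = \mathbf{1}\{F_k(x) > \eta\}$ or $\theta_k = \mathbf{1}\{F_k(x) \leq \eta\}$ respectively at the leader's optimum.

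The two main obstacles are the big-$M$ construction and the outer $\inf_\eta$. For $\mathrm{EP}_\eta$ and $\mathrm{VaR}_\alpha$ one has to exhibit a finite $M$ rendering the reformulation exact; this follows from Lipschitz continuity of $f$ (Lemma~\ref{Lemmaf}) together with the compactness of $X$ assumed in those cases, which bounds $|c^\top x + q^\top y_k - \eta|$ uniformly over $x \in X$, $y_k \in \Psi(x, Z_k)$, $k \in I$. For $\mathrm{CVaR}_\alpha$ and $\mathrm{VaR}_\alpha$ one must justify pulling $\inf_\eta$ out of the bilevel program; this relies on the fact that $\eta$ is a scalar upper-level parameter entering only through $a(x, w)$ and $b_k(x, w)$ and not through $\Psi(x, Z_k)$, so joint minimization over $(x, \eta)$ is equivalent to successive minimization and the resulting problem still admits a standard bilevel structure.
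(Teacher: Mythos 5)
Your proposal follows essentially the same route as the paper's proof: scenario-wise separability of the lower level (so that $w \in \Psi_{\mathcal{R}}(x)$ forces $y_k \in \Psi(x,Z_k)$ for every $k$ while the auxiliary variables remain free), optimistic selection driving $v_k$ and $\theta_k$ to their extreme feasible values, and a big-$M$ construction for the quantile-based measures — indeed you sketch all seven rows, whereas the paper argues only $\mathrm{EP}_\eta$, $\mathrm{VaR}_\alpha$ and $\mathcal{R}_{\max}$ in-text and cites \cite{BurtscheidtClausDempe2019} for the remaining cases. One small correction to your big-$M$ justification: the finite $M$ need only dominate $c^\top x + \inf\{q^\top y \mid y \in \Psi(x,Z_k)\} - \eta = f(x,Z_k) - \eta$ over the compact set $X$ and $k \in I$ (which is exactly what Lemma~\ref{Lemmaf} plus boundedness of $X$ delivers, and is the $M$ chosen in the paper), since $\sup\{q^\top y \mid y \in \Psi(x,Z_k)\}$ may be $+\infty$ when the lower-level argmin is an unbounded face; the optimistic convention ensures that only the minimizing $y_k$ matters.
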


\begin{proof}
For $\mathcal{R} \in \lbrace \mathbb{E}, \mathrm{EE}_\eta^1, \mathrm{SD}^1_\rho, \mathrm{CVaR}_\alpha \rbrace$, we refer to \cite[Section 5]{BurtscheidtClausDempe2019}.

\medskip\noindent
For the excess probability, the first of the considered quantile-based risk measures, we have $\mathrm{EP}_{\eta}[F(x)] = \mathbb{P}\left[c^\top x + \inf_y \{q^\top y \; | \; y \in \Psi(x, Z(\cdot))\} > \eta\right]$. Fix $M \in \mathbb{R}$ such that
$$
	M > \sup\left\{c^\top x + \inf_{y_k} \{q^\top y_k \; | \; y_k \in \Psi(x, Z_k)\} \; | \; x \in X,\; k \in I\right\} - \eta
$$
and, for $y_k \in \Psi(x, Z_k)$, let
$$
	\theta_k :=
	\begin{cases}
		0 & \text{if}\ c^\top x + q^\top y_k - \eta \leq 0,\\
		1 & \text{otherwise.}
	\end{cases}
$$
Then the excess probability is equal to
\begin{align*}
	&\sum_{k \in I}\pi_k \inf_{y_k, \theta_k} \left\{\theta_k \; | \; M\theta_k \geq c^\top x + q^\top y_k - \eta,\; y_k \in \Psi(x, Z_k),\; \theta_k \in \{0, 1\}\right\}\\
	&\left.
	\begin{aligned}
		= \inf_{\genfrac{}{}{0pt}{}{y_1, \ldots, y_K,}{\theta_1, \ldots, \theta_K}} \left\{\sum_{k \in I}\pi_k \theta_k \; | \; \right.&M\theta_k \geq c^\top x + q^\top y_k - \eta,\; y_k \in \Psi(x, Z_k),\; \theta_k \in \{0, 1\}\\
		&\forall k \in I
	\end{aligned}
	\right\}.
\end{align*}

\medskip\noindent
Similar to the proof for $\mathcal{R} = \mathrm{EP}_\eta$, $\mathbb{P}[f(x,Z(\cdot)) \leq \eta]$ equals
\small{$$
\begin{aligned}
	&\sum_{k \in I}\pi_k \inf_{y_k, \theta_k} \left\{\theta_k \; | \; M(1 - \theta_k) \geq c^\top x + q^\top y_k - \eta,\; y_k \in \Psi(x, Z_k),\; \theta_k \in \{0, 1\} \right\}\\
	&\left.
	\begin{aligned}
		= \left.\inf_{\genfrac{}{}{0pt}{}{y_1, \ldots, y_K,}{\theta_1, \ldots, \theta_K}} \right\{\sum_{k \in I}\pi_k \theta_k \; | \; &M(1 - \theta_i) \geq c^\top x + q^\top y_k - \eta,\; y_k \in \Psi(x, Z_k),\; \theta_k \in \{0, 1\} \\
		&\forall k \in I
	\end{aligned}
	\right\},
\end{aligned}
$$}\normalsize
where
$$
    \theta_k :=
	\begin{cases}
		1 & \text{if}\ c^\top x + q^\top y_k - \eta \leq 0,\\
		0 & \text{otherwise.}
	\end{cases}
$$
Thereby we get equality of $\mathrm{VaR}_{\eta}[f(x, Z(\cdot))]$ and
$$
	\inf \left\{\eta \in \mathbb{R} \; | \; \inf_{\genfrac{}{}{0pt}{}{y_1, \ldots, y_K,}{\theta_1, \ldots, \theta_K}} \left\{\sum_{k \in I}\pi_k \theta_k \; | \; (y_1, \ldots, y_K, \theta_1, \ldots, \theta_K) \in \Psi_{\mathrm{VaR}_{\eta}}(x) \right\} \geq \alpha\right\}.
$$

\medskip\noindent
The worst-case risk measure is equal to
$$\sup_{k \in I} \left\{c^\top x + \min_{y_k} \{q^\top y \; | \; y \in \Psi(x, Z_k)\}\right\}
$$
and the result follows from $\Psi_{\mathcal{R}_{\max}} = \Psi(x, Z_1) \times \ldots \times \Psi(x, Z_K)$.
\end{proof}

\begin{remark} \label{RemBlockStructure}
a. The equivalent standard bilevel problem is linear if $\mathcal{R} \in \lbrace \mathbb{E}, \mathrm{EE}_\eta^1, \mathrm{SD}_{\rho}^1 \rbrace$.

\medskip\noindent
b. Analogous to \cite[Remarks 5.2, 5.4]{BurtscheidtClausDempe2019}, the inner minimization problems of the standard bilevel linear programs for $\mathcal{R} \in \lbrace \mathbb{E}, \mathrm{EE}_\eta^1, \mathrm{EP}_\eta, \mathcal{R}_{\max} \rbrace$ can be decomposed into $K$ scenario problems that only differ w.r.t. the right-hand side of the constraint system. For the other models, a similar decomposition is possible after Lagrangean relaxation of the coupling constraints involving different scenarios. 

\medskip\noindent
c. For $\mathcal{R} = \mathrm{CVaR}_\alpha$, every evaluation the objective function in the standard bilevel linear program corresponds to solving a bilevel linear problem with scalar upper level variable $\eta$.

\medskip\noindent
d. Alternate models for $\mathcal{R} = \mathrm{VaR}_\alpha$ are given in \cite{Ivanov2014} and \cite{DempeIvanovNaumov2017}, where the considered bilevel stochastic linear problem is reduced to a mixed-integer nonlinear program and a mathematical programming problem with equilibrium constraints, respectively. 
A mean-risk model with $\mathcal{R} = \mathrm{CVaR}_\alpha$ is used in \cite[Sect. III]{CarrionArroyoConejo2009}.
\end{remark}

Similar reformulations can be obtained for the models discussed in Subsect.~\ref{SubSecStoDom} if we assume that the disutility function is linear.

\begin{proposition}
Assume $\mathrm{dom} \; f \neq \emptyset$, and let $X \subseteq P_Z$ be a bounded polyhedron. Then for any parameter $\gamma$, the problem
$$
	\min_x \left\{g^\top x \; | \; F(x) \in \mathcal{A},\; x \in X \right\}
$$
is equivalent to
$$
	\min_x \Big\{g^\top x \; | \; \inf_{w_j} \left\{a(w_j) \; |\; w_j \in \Psi_{\mathcal{R}}(x)\right\} \geq \delta_j \ \forall j = 1, \ldots, l,\; x \in X\Big\}.
$$
The specific formulations based on the examples in Subsect.~\ref{SubSecStoDom} are listed in Table~\ref{TableSD_discretemodels}, where $\bar{a}_j := 1 - \mathbb{P}[b \leq a_j]$ and $\tilde{a}_j := \int_{\mathbb{R}^s}\max\{b(z) - a_j, 0\}\;\mu_Z(dz)$.

\renewcommand{\arraystretch}{1.2}
\begin{table}
\caption{Equivalent programs}
\label{TableSD_discretemodels}
\begin{tabular}{llllll}
\hline\noalign{\smallskip}
$\mathcal{A}$&$\gamma$&$w_j$&$a(w_j)$&$\mathcal{R}$&$\delta_j$\\
\noalign{\smallskip}\hline\hline\noalign{\smallskip}
a.\!\!\!&$\!\!\!\!\!\!\begin{array}{l}\beta_j \in \mathbb{R},\ p_j \in (0, 1)\\\text{with}\ j = 1, \ldots, l\end{array}\!\!\!$&$\!\!\!\!\!\!\begin{array}{l} (y_{1j}, \ldots, y_{Kj}) \in \mathbb{R}^{Km} \\ (\theta_{1j}, \ldots, \theta_{Kj}) \in \lbrace 0,1 \rbrace^{K}\end{array}\!\!\!$&$\!\!\!\sum_{k \in I}\pi_k \theta_{kj}\!\!\!$&$\mathrm{VaR}_{\beta_j}\!\!\!$&$p_j\!\!\!$\\ \hline
b.\!\!\!&$\!\!\!\!\!\!\begin{array}{l}\text{any finite discrete}\\\text{benchmark variable}\end{array}\!\!\!$&$\!\!\!\!\!\!\begin{array}{l} (y_{1j}, \ldots, y_{Kj}) \in \mathbb{R}^{Km} \\ (\theta_{1j}, \ldots, \theta_{Kj}) \in \lbrace 0,1 \rbrace^{K}\end{array}\!\!\!$&$\!\!\!\sum_{k \in I}\pi_k \theta_{kj}\!\!\!$&$\mathrm{EP}_{a_j}\!\!\!$&$\bar{a}_j\!\!\!$\\\cline{1-1}\cline{3-6}
c.\!\!\!&$\!\!\!\!\!\!\begin{array}{l}b\ \text{with realizations}\\a_1, \ldots, a_l\end{array}\!\!\!$&$\!\!\!\!\!\!\begin{array}{l} (y_{1j}, \ldots, y_{Kj}) \in \mathbb{R}^{Km} \\ (v_{1j}, \ldots, v_{Kj}) \in \mathbb{R}^{K}\end{array}\!\!\!$&$\!\!\!\sum_{k \in I}\pi_k v_{kj}\!\!\!$&$\mathrm{EE}_{a_j}^1\!\!\!$&$\tilde{a}_j$\\
\noalign{\smallskip}\hline\noalign{\smallskip}
\end{tabular}
\end{table}
\end{proposition}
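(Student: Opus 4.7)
The plan is to treat the three shapes of $\mathcal{A}$ from the examples in Subsect.~\ref{SubSecStoDom} one at a time. In each case I first reduce the set $\mathcal{A}$ to a \emph{finite} list of scalar constraints of the form $\rho_j(F(x)) \leq \delta_j$ (or $\geq \delta_j$), and then apply the matching row of Proposition~\ref{PropositionSD_discretemodels} to rewrite each such inequality as a constraint coupling $x$ with an inner linear program whose argmin is the announced $\Psi_{\mathcal{R}}$. Stacking the $l$ resulting inner programs, with per-$j$ copies of the scenario variables $y_{kj}$ and of the auxiliary $\theta_{kj}$ or $v_{kj}$, produces a single lower-level problem whose argmin inherits the product structure in $j$. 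Boundedness of $X$ enters in (a) and (b) to exhibit a uniform big-$M$, exactly as in the $\mathrm{EP}_\eta$ and $\mathrm{VaR}_\alpha$ parts of Proposition~\ref{PropositionSD_discretemodels}.

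For case (a) the probabilistic constraint is already a finite conjunction $\mathbb{P}[F(x) \leq \beta_j] \geq p_j$ for $j = 1, \ldots, l$, so I can reuse the binary/big-$M$ encoding from the $\mathrm{VaR}_\alpha$ row of Proposition~\ref{PropositionSD_discretemodels}, with the threshold $\eta$ frozen to $\beta_j$ and the level $\alpha$ to $p_j$. For case (b), first-order dominance with a finite-discrete benchmark $b$ rests on the standard preliminary observation that $\beta \mapsto \mathbb{P}[b \leq \beta]$ is piecewise constant with upward jumps only at the realizations $a_1 < \cdots < a_l$, so the continuum of constraints $\mathbb{P}[F(x) \leq \beta] \geq \mathbb{P}[b \leq \beta]$ collapses to the finite system $\mathbb{P}[F(x) \leq a_j] \geq \mathbb{P}[b \leq a_j]$, i.e. $\mathrm{EP}_{a_j}[F(x)] \leq \bar a_j$ for $j = 1, \ldots, l$; each of these is then rewritten via the $\mathrm{EP}_\eta$ row, yielding the corresponding entries of Table~\ref{TableSD_discretemodels}.

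The main obstacle lies in case (c), where I must justify that the SSD constraint $\psi_{F(x)}(\eta) \leq \psi_b(\eta)$ with $\psi_Y(\eta) := \mathbb{E}[\max\{Y - \eta, 0\}]$ imposed for \emph{every} $\eta \in \mathbb{R}$ collapses to the $l$ checks $\psi_{F(x)}(a_j) \leq \tilde a_j$. Here I would use that $\psi_b$ is continuous and piecewise linear with breakpoints exactly at $a_1 < \cdots < a_l$, while $\psi_{F(x)}$ is convex and nonincreasing: on each finite subinterval $[a_j, a_{j+1}]$ the difference $\psi_{F(x)} - \psi_b$ is convex and hence attains its maximum at one of the endpoints; on the tail $(a_l, \infty)$ the inequality $\psi_{F(x)}(a_l) \leq 0$ forces $F(x) \leq a_l$ almost surely and thus $\psi_{F(x)} \equiv 0$ there; and on $(-\infty, a_1]$ both functions are affine with $\psi_b$ of slope $-1$ and $\psi_{F(x)}$ of slope in $[-1, 0]$, so their difference is nondecreasing in $\eta$ and therefore already controlled by its value at $a_1$. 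Once this finite system is in place, each inequality $\mathbb{E}[\max\{f(x, Z) - a_j, 0\}] \leq \tilde a_j$ is encoded by the $\mathrm{EE}_\eta^1$ row of Proposition~\ref{PropositionSD_discretemodels} with $\eta = a_j$ and auxiliary $v_{kj}$, and combining the constraints over $j$ delivers the reformulation advertised in the proposition together with the data in Table~\ref{TableSD_discretemodels}.
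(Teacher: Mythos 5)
The paper itself offers no proof of this proposition---it is stated right after the remark that ``similar reformulations can be obtained'' from Proposition~\ref{PropositionSD_discretemodels}---so your job was to reconstruct the intended argument, and you have done so correctly: reduce each acceptance set $\mathcal{A}$ to finitely many scalar risk constraints indexed by $j$, then reuse the scenario-wise encodings of Table~\ref{TableRM_discretemodels} with the threshold frozen at $\beta_j$ resp.\ $a_j$ and with one copy $(y_{kj})_k$, $(\theta_{kj})_k$ or $(v_{kj})_k$ of the auxiliary variables per constraint. The one genuinely nontrivial step that the paper nowhere records is your case (c): the collapse of the second-order dominance constraint from all $\eta \in \mathbb{R}$ to the breakpoints $a_1 < \cdots < a_l$. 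Your argument---with $\psi_Y(\eta) := \mathbb{E}[\max\{Y - \eta, 0\}]$, the difference $\psi_{F(x)} - \psi_b$ is convex on each $[a_j, a_{j+1}]$ and hence maximized at an endpoint, $\psi_{F(x)}(a_l) \leq 0$ kills the right tail, and the slope comparison ($\psi_b' \equiv -1$ versus $\psi_{F(x)}' \in [-1,0]$) handles $(-\infty, a_1]$---is sound and is the standard reduction; for case (b) the analogous finite reduction is already noted in the paper's Example in Subsect.~\ref{SubSecStoDom}. One discrepancy you should make explicit: your derivation correctly produces \emph{upper} bounds $\mathrm{EP}_{a_j}[F(x)] \leq \bar a_j$ and $\mathrm{EE}^1_{a_j}[F(x)] \leq \tilde a_j$ for rows (b) and (c), whereas the displayed reformulation imposes $\inf_{w_j} a(w_j) \geq \delta_j$ uniformly; only row (a) is naturally of the form ``$\geq$'', and even there the indicator encoding requires taking the optimistic (largest) value of $\sum_{k \in I}\pi_k\theta_{kj}$ over $\Psi_{\mathcal{R}}(x)$ rather than the infimum. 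So your proof establishes the proposition as it is evidently meant, but to match the printed formulas you must either reverse the inequality in rows (b) and (c) or replace $a(w_j)$ and $\delta_j$ by their complements; saying this explicitly would turn your sketch into a complete and correct proof.
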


\subsection{Solution Approaches}

To solve bilevel problems, it is very common to use a single level reformulation. Often the lower level minimality condition is replaced by the Karush-Kuhn-Tucker or the Fritz John conditions and the bilevel problem is reduced to a mathematical programming problem with equilibrium constraints (cf. \cite{CarrionArroyoConejo2009}, \cite[Chap. 3.5.1]{Dempe2002}, \cite{Ivanov2014}).

\medskip

For $\mathcal{R} \in \lbrace \mathbb{E}, \mathrm{EE}_\eta^1, \mathrm{SD}^1_\rho, \mathrm{EP}_\eta, \mathcal{R}_{\max} \rbrace$, the equivalent standard bilevel programs in Proposition~\ref{PropositionSD_discretemodels} can be all be restated as
\begin{equation}
\label{GenForm}
\min_u \lbrace g^\top u + \min_w \lbrace h^\top w \; | \;  w \in \Psi(u) \rbrace \; | \; u \in U \rbrace,
\end{equation}
where $\Psi: \mathbb{R}^k \rightrightarrows \mathbb{R}^l$ is given by $\Psi(u) = \mathrm{Argmin}_w \lbrace t^\top w \; | \; Ww \leq Bu + b \rbrace$ for vectors $g \in \mathbb{R}^k$, $h, t \in \mathbb{R}^l$ and $b \in \mathbb{R}^r$, matrices $W \in \mathbb{R}^{r \times l}$ and $B \in \mathbb{R}^{r \times k}$, and $U \subseteq \mathbb{R}^k$ is a nonempty polyhedron. The usage of the KKT conditions of the lower level problem leads to the single-level problem
\begin{equation}
\label{KKTReform}
\min_{u, w, v} \left\{ g^\top u + h^\top w \; \Bigg| \; \begin{aligned}  &Ww \leq Bu + b, \; W^\top v = t, \; v \leq 0, \\ &v^\top (Ww - Bu - b) = 0, \; u \in U \end{aligned} \right\}.
\end{equation}
More details as well as statements on the coincidence of optimal values and the existence of local and global minimizers are given in \cite[Sect. 6]{BurtscheidtClausDempe2019}. If the condition $v^\top (Ww - Bu - b) = 0$ is relaxed by $v^\top (Ww - Bu - b) \leq \varepsilon$ (the resulting problem is denoted by $\mathrm{P(}\varepsilon\mathrm{)}$), the violation of regularity conditions like (MFCQ) and (LICQ) at every feasible point of \eqref{KKTReform} can be bypassed. A discussion of other difficulties associated with \eqref{KKTReform} is provided in \cite[Chap. 3.1.2]{Henkel2014}.

\medskip

In \cite[Sect. 6]{BurtscheidtClausDempe2019} it is also shown that $(\overline{u}, \overline{w})$ is a local minimizer of the optimistic formulation, if $(\overline{u}, \overline{w}, \overline{v})$ is an accumulation point of a sequence $\lbrace (u_n, w_n, v_n) \rbrace_{n \in \mathbb{N}}$ of local minimizers of problem $\mathrm{P(}\varepsilon_n\mathrm{)}$ for $\varepsilon_n\downarrow 0$.

\medskip

In the risk-neutral setting, problem \eqref{GenForm} exhibits a block-structure (cf. Remark~\ref{RemBlockStructure} b.). Adapting the solution method for general linear complementarity problems proposed in \cite{HuMitchellPangBennettKunapuli2008}, this special structure has been used in \cite[Chap. 6]{Henkel2014} to construct an efficient algorithm for the global resolution of bilevel stochastic linear problems based on dual decomposition.

\begin{remark}
Utilizing the lower level value function, problem \eqref{GenForm} can be reformulated as a single level quasiconcave optimization problem (cf. \cite[Chap. 3.6.5]{Dempe2002}). Solution methods based on a branch-and-bound scheme have been proposed in \cite{Tuy1998} and \cite{TuyMigdalasVaerbrand1994}. However, without modifications, these algorithms fail to exploit the block structure arising in risk-neutral bilevel stochastic linear optimization models (cf. \cite[Chap. 4.2]{Henkel2014}).
\end{remark}

\section{Two-stage Stochastic Bilevel Programs}

In two-stage stochastic bilevel programming, both leader and follower have to make their respective first-stage decisions without knowledge of the realization of a stochastic parameter. Afterwards, the second-stage decisions are made under complete information. This leads to the following chronology of decision and observation: 
$$
\begin{matrix} \text{leader} \\ \text{decides}\, x_1 \end{matrix}
\hspace{2.5pt} \rightarrow \hspace{2.5pt} 
\begin{matrix} \text{follower} \\ \text{decides}\, y_1 \end{matrix}
\hspace{2.5pt} \rightarrow \hspace{2.5pt}
\begin{matrix} z = Z(\omega) \\ \text{is revealed} \end{matrix}
\hspace{2.5pt} \rightarrow \hspace{2.5pt}
\begin{matrix} \text{leader decides} \\ x_2(x_1,y_1,z) \end{matrix}
\hspace{2.5pt} \rightarrow \hspace{2.5pt}
\begin{matrix} \text{follower decides} \\ y_2(x_1,y_1,x_2,z) \end{matrix}
$$

\begin{remark}
The bilevel stochastic linear problems considered in Sect.~\ref{SecBilevelStochasticLinearOptimization} can be understood as special two-stage bilevel programs, where the follower's first-stage and the leader's second stage decision do not influence the outcome.
\end{remark}

In \cite{AlizadehMarcotteSavard2013}, a two-stage stochastic extension of the bilevel network pricing model introduced in \cite{LabbeMarcotteSavard1998} is studied. Consider a multicommodity transportation network $(N, \Lambda, K)$, where $(N,\Theta)$ is a directed graph and each commodity $k \in K$ is to be transported from an origin $O(k) \in N$ to a destination $D(k) \in N$ in order to satisfy a demand $n^k \in (0,\infty)$. The set of arcs $\Theta$ is partitioned into the subsets $\theta$ and $\overline{\theta}$ of tariff a tariff-free arcs, respectively, and the leaders is maximizing the revenue raised from tariffs, knowing that user flows are assigned to cheapest paths. In \cite{LabbeMarcotteSavard1998}, this situation is modeled as a bilevel program
$$
"\max_{x}" \left\{ \sum_{k \in K} x^\top y^k \; | \; (y,\overline{y}) \in \Psi(x) \right\}
$$
with lower level is given by
$$
\Psi(x, c, d, b) := \underset{y,\overline{y}}{\mathrm{Argmin}} \left\{ \sum_{k \in K} \left[ (c + x)^\top y^k + \overline{c}^\top \overline{y}^k \right] \; | \; \begin{matrix} y, \overline{y} \geq 0, \\ Ay^k + \overline{A} \overline{y}^k = b^k \; \forall k \in K \end{matrix} \right\},
$$
where $x$ is the vector of tariffs controlled by the leader, $y^k$ and $\overline{y}^k$ are the flows of commodity $k$ on the tariff and tariff-free arcs, respectively. Moreover, $c$ and $\overline{c}$ are the fixed costs on $\theta$ and $\overline{\theta}$, respectively, $(A,\overline{A})$ denotes the node-arc incidence matrix and the vectors $b^k$ defined by
$$
b^k_i := \Bigg\{ \begin{matrix} n^k, &\text{if} \; i = O(k) \\ -n^k, &\text{if} \; i = D(k) \\ 0, &\text{else} \end{matrix}
$$
are used to express nodal balance. \cite{AlizadehMarcotteSavard2013} extends the above model to a two-stage setting including market uncertainties: After deciding on first-stage tariffs, the situation repeats itself on the same network but with different cost and demand parameters. At the first-stage, only the distribution of the second-stage parameter $Z(\omega) = (c_2, d_2, b_2)(\omega)$ is known and the stages are linked by the restriction that the second-stage tariffs should not differ too widely from those set at the first stage. The linking constraint is motivated by policy regulations and competitivity issues. In a risk-neutral setting, this results in the problem
\begin{equation}
\label{TwoStageNetworkPricing}
"\max_{x_1}" \left\{ \sum_{k \in K} x_1^\top y^k_1 + \mathbb{E}\big[\Phi(x_1, Z(\cdot))\big] \; | \; (y_1,\overline{y}_1) \in \Psi(x_1,c_1,d_1,b_1) \right\},
\end{equation}
where the recourse is given by
$$
\Phi(x_1, Z(\omega)) := "\max_{x_2}" \left\{ \sum_{k \in K} x_2^\top y^k_2 \; | \; (x_1,x_2) \in \Gamma(\delta), \; (y_2,\overline{y}_2) \in \Psi(x_2,Z(\omega)) \right\}
$$
and the set $\Gamma(\delta)$ is defined as either
$$
\Gamma(\delta) := \Gamma_A(\delta) := \lbrace (x_1,x_2) \; | \; |x_{1, \theta} - x_{2, \theta}| \leq \delta_\theta \; \forall \theta \in \Theta \rbrace
$$
if tariff changes are limited in absolute values or
$$
\Gamma(\delta) := \Gamma_R(\delta) := \lbrace (x_1,x_2) \; | \; |x_{1, \theta} - x_{2, \theta}| \leq \delta_\theta |x_{1, \theta}| \; \forall \theta \in \Theta \rbrace
$$
if proportional limits are considered. Assuming that the underlying random vector $Z$ is discrete with a finite number of realizations, a reformulation of \eqref{TwoStageNetworkPricing} as a single-stage bilevel program is established in \cite{AlizadehMarcotteSavard2013}. Moreover, sensitivity analysis of the optimal value function of \eqref{TwoStageNetworkPricing} w.r.t. the parameter $\delta \in [0,\infty)^{|\Theta|}$ (cf. \cite[Proposition 4.1, Proposition 4.2]{AlizadehMarcotteSavard2013}) as well as numerical studies are conducted (cf. \cite[Sect. 5, Sect. 6]{AlizadehMarcotteSavard2013}).

\section{Challenges}

We shall highlight some aspects of bilvel stochastic programming that are highly deserving of future research: 

\medskip
\textbf{Going (further) beyond the risk-neutral case for nonlinear models:} The first paper on bilevel stochastic programming has already outlined the basic principles as well as existence and sensitivity results for risk neutral models (cf. \cite{PatrikssonWynter1999}). Nevertheless, so far, most of the research on bilevel stochastic nonlinear programming is still concerned with the risk-neutral case. Notable exceptions are \cite{CarrionArroyoConejo2009} and \cite{KovacevicPflug2013}, where models involving the conditional value at risk are considered. In the first paper the problem of maximizing the medium-term revenue of an electricity retailer under uncertain pool prices, demand, and competitor prices is modeled as a bilevel stochastic quadratic problem, while the latter explores links between electricity swing option pricing and stochastic bilevel optimization. However, there exists no systematic analysis of bilevel stochastic nonlinear problems in the broader framework of coherent risk measures or higher stochastic dominance constraints. Future research may also consider distributionally robust models (cf. \cite{ZhangXuZhang2016}).

\medskip

\textbf{Exploiting (quasi) block structures arising in risk-averse models:} Under finite discrete distributions many bilevel stochastic problems can be reformulated as standard bilevel programs. While this reformulation entails a blow-up of the dimension which is usually linear in the number of scenarios, the resulting problems often exhibit (quasi) block structures (cf. Remark~\ref{RemBlockStructure}b., \cite{PatrikssonWynter1999}). For risk-neutral bilevel stochastic linear problems, \cite[Chap. 6]{Henkel2014} utilizes these structures to enhance the mixed integer programming based solution algorithm of \cite{HuMitchellPangBennettKunapuli2008} resulting in a significant speed-up. Based on the structural similarities an analogous approach should be possible for risk-averse models after Lagrangean relaxation of coupling constraints.

\medskip

\textbf{Going beyond exogenous stochasticity:} While the analysis in the vast majority of papers on stochastic programming is confined to the case of purely exogenous stochasticity, this assumption is known to be unrealistic in economic models, where the decision maker holds market power. Therefore, models with decision dependent distributions are of particular interest in view of stochastic Stackelberg games (cf. \cite{DeMiguelXu2009}).

\medskip
\small{\textbf{Acknowledgement}
The second author thanks the Deutsche Forschungsgemeinschaft for its support via the Collaborative Research Center TRR 154.}\normalsize

\end{document}